\documentclass{article}                     
\usepackage{graphicx, amsmath, amssymb, amsfonts, xfrac, booktabs, xcolor, bm, bbm, amsthm}
\usepackage{hyperref, enumitem}
\setlist[enumerate]{label=(\roman*), leftmargin=*, labelindent=0pt}
\usepackage[margin = 1in]{geometry}
\usepackage{authblk}

\usepackage[numbers,sort]{natbib}
 \def\newblock{\ }%
 \bibpunct[, ]{[}{]}{,}{n}{}{,}%

\makeatletter
\renewcommand\subsection{\@startsection
{subsection}{2}{\z@}%
{-18\p@ \@plus -4\p@ \@minus -4\p@}%
{8\p@ \@plus 4\p@ \@minus 4\p@}%
{\normalfont\bfseries}}
\makeatother

\makeatletter
\let\cl@chapter\relax
\makeatother

\usepackage[capitalize]{cleveref}

\newcommand{\germansF}{{Po\-si\-tiv\-ste\-llen\-s\"atze}}
\newcommand{\germanF}{{Positivstellensatz}}
\newcommand{\schmu}{Schm\"{u}dgen}
\newcommand\R{{\mathbb R}}
\newcommand{\N}{\mathbb{N}}
\newcommand{\bx}{\mathbf{x}}

\newcommand{\bu}{\mathbf{u}}
\newcommand{\bg}{\mathbf{g}}

\newcommand{\cQ}{\mathcal{Q}}
\newcommand{\cT}{\mathcal{T}}
\newcommand{\cR}{\mathcal{R}}
\renewcommand{\P}{\mathcal{P}}
\newcommand{\coeff}{\operatorname{coef}}
\newcommand{\bigO}{\mathcal{O}}
\newcommand{\bigOg}{\mathcal{O}_{\bg}}
\newcommand{\Ll}{\operatorname{{\bf L}}}
\newcommand{\normsup}[1]{{\|#1\|}_{\operatorname{sup}}}
\newcommand{\normone}[1]{{\|#1\|}_{1,\coeff}}

\newcommand{\vio}{^{v}}
\newcommand{\bbg}{\bar{\mathbf{g}}}
\newcommand{\dist}{\operatorname{dist}}
\newcommand{\cl}{\operatorname{{{\bf c}}}}
\renewcommand{\Ll}{\operatorname{{\bf L}}}
\newcommand{\barg}{\bar{\mathbf{g}}}
\newcommand{\newell}{t}

\newtheorem{theorem}{Theorem}
\newtheorem{lemma}{Lemma}
\newtheorem{proposition}{Proposition}
\newtheorem{corollary}{Corollary}
\newtheorem{example}{Example}

\begin{document}

\title{
Degree Bounds for {\germansF} of general semialgebraic sets
}

\author[1]{Olga Heijmans-Kuryatnikova\thanks{\href{mailto:kuryatnikova@ese.eur.nl}{kuryatnikova@ese.eur.nl}}}
\author[2]{Juan C. Vera\thanks{\href{mailto:j.c.veralizcano@tilburguniversity.edu}{j.c.veralizcano@tilburguniversity.edu}}}
\author[3]{Luis F. Zuluaga\thanks{\href{mailto:luis.zuluaga@lehigh.edu}{luis.zuluaga@lehigh.edu}}}

\affil[1]{Econometric Institute, Erasmus University Rotterdam, Rotterdam, The Netherlands}
\affil[2]{Tilburg School of Economics and Management, Tilburg University, Tilburg, The Netherlands}
\affil[3]{Department of Industrial and Systems Engineering, Lehigh University, Bethlehem, PA, USA}

\maketitle

\begin{abstract}
Let \(p_{\min}\) denote the minimum of a polynomial \(p\) over a (general) compact semialgebraic set  \(S \subseteq \mathbb{R}^n\). A standard way to approximate \(p_{\min}\) is via hierarchies built from {\germansF}, which certify nonnegativity of polynomials on \(S\) using sums of squares or other classes of globally nonnegative polynomials. As the degree of the certificate grows, the values generated by these hierarchies converge asymptotically to \(p_{\min}\). A natural question is, then, to determine explicit bounds on the certificate's degree needed to obtain a prescribed \(\varepsilon\)-approximation to \(p_{\min}\), or equivalently certify the positivity of $f:=p - p_{\min} + \varepsilon$ on $S$.
We improve the current best degree bounds for Putinar's and  Schm\"udgen's SOS-{\germanF} over $S$. Also, we obtain degree bounds for Krivine--Stengle's and the recently introduced extended-Handelman's $\R_+$-{\germansF} over $S$; providing the first explicit degree bounds for linear optimization-based hierarchies over general compact semialgebraic sets. Our approach is based on a lift-and-project construction in which we add new variables to construct an algebraic representation of the distance to the set $S$ using {\L}ojasiewicz's inequality. This lets us lift the problem of certifying the positivity of $f$ on the (complex) set $S$ to the problem of certifying the positivity of a related polynomial $F$ on a higher-dimensional hypercube. By projecting out the added variables, non-negativity certificates for $F$ on the hypercube become non-negativity certificates for $f$ on $S$. Our approach offers a unified methodology to obtain degree bounds for several Positivstellensatz-based hierarchies over general compact sets, narrowing the gap between results for the hypercube (or other simple sets) and more general semialgebraic sets.
\end{abstract}

\section{Introduction}
\label{sec:intro}
Given a polynomial $p \in \R[\bx]:=\R[\bx_1,\dots,\bx_n]$, the set of $n$-variate polynomials with real coefficients, let
\begin{equation}\label{mod:POP}
p_{\min} :=  \min\{p(x):g_j(x) \ge 0, j=1,\dots,m\},
\end{equation}
where $g_j \in \R[\bx]$ for $j=1,\dots,m$. Let $\bg \in \R[\bx]^m$ denote the array \[\bg:=[g_1, \dots, g_m]\] and define
\[
d_\bg := \max\{\deg g_j: j=1,\dots,m\}.\]
In what follows, we assume that the semialgebraic set
\[
S_{\bg} := \{x\in \R^n: g_j(x)\ge 0, j=1,\dots,m\},
\]
is compact and nonempty.

Reformulate the {\em polynomial optimization} problem~\eqref{mod:POP} as:
\begin{equation}\label{mod:epi}
p_{\min} :=  \max\{\lambda: p(\bx) - \lambda  \in \P(S_\bg)\},
\end{equation}
where $\P(S_\bg) := \{ p \in \R[\bx]: p(x) \ge 0 \text{ for all } x \in S_\bg\} \subset \R[\bx]$ denotes the cone of polynomials nonnegative on $S_\bg$.

By approximating $\P(S_\bg)$ by smaller subsets of $\R[\bx]$ in~\eqref{mod:epi}, one obtains lower bounds on~$p_{\min}$. Such approximations can be derived from algebraic constructs defined in terms of globally nonnegative polynomials, such as {\em sum-of-squares} (SOS) and non-negative constants. Being SOS is a strong form of non-negativity, but it is {\em tractable}, as it can be formulated as a semidefinite programming (SDP) feasibility problem~\citep[see, e.g.,][]{laurent2009sums}.
To introduce these algebraic constructs,
let $\Sigma[\bx] \subset \R[\bx]$ denote the cone of all SOS polynomials; that is, the set of polynomials that can be written as $q_1^2 + \cdots + q_k^2$ for some $k \ge 1$, where each $q_j \in \R[\bx]$ for $j=1,\dots,k$. Also,
for any $\alpha \in \N^m$, denote $\bg^{\alpha} := \prod_{j=1}^m g_j^{\alpha_j}$. In particular, $\bx^{\alpha} = \prod_{i=1}^n x_i^{\alpha_i}$.

For any $r \ge 0$, we define: the \emph{truncated preorder} associated with $\bg$,
\[
\cT(\bg)_r= \left\{\sum_{\alpha \in \{0,1\}^m}\sigma_{\alpha}\bg^{\alpha}:\sigma_{\alpha} \in \Sigma[\bx], \deg(\sigma_\alpha \bg^\alpha) \le r, \alpha \in \{0,1\}^m \right\};
\]
the \emph{truncated quadratic module} associated with $\bg$,
\[
\cQ(\bg)_r=\left\{\sum_{j=0}^m \sigma_jg_j: \sigma_j \in \Sigma[\bx], \deg(\sigma_jg_j) \le r, j=0,\dots,m \right\}, \qquad (g_0 := 1);
\]
 and the \emph{truncated preprime} associated with $\bg$:
\[
\cR(\bg)_r= \left\{\sum_{\alpha \in \N^m}s_{\alpha}\bg^{\alpha}:s_{\alpha} \in \R_+, \deg(s_\alpha \bg^\alpha) \le r, \alpha \in \N^m \right\}.
\]
For all $r \ge 0$, each of these cones consists of polynomials constructed in ways that guarantee nonnegativity on $S_\bg$. In particular,
\[\cQ(\bg)_r \subseteq \cT(\bg)_r,\quad \cR(\bg)_r \subseteq \cT(\bg)_r, \text{ and } \cT(\bg)_r \subseteq \P(S_\bg).
\]
Upon selecting any of these three cones of polynomials to replace $\P(S_\bg)$ in~\eqref{mod:epi}, we obtain a hierarchy of tractable (conic) optimization problems that yields a nondecreasing sequence of lower bounds for $p_{\min}$ as~$r$ increases. Specifically, after letting
\[
p^{\cT(\bg)_r}_{\min} := \sup\{\lambda: p(\bx) - \lambda \in \cT(\bg)_r\},
\]
One obtains the hierarchy of  SDP problems:
\[
p^{\cT(\bg)_0}_{\min} \le p^{\cT(\bg)_1}_{\min} \le \cdots \le p_{\min}.
\]
Analogous  hierarchies of conic optimization problems are obtained by defining $p^{\cQ(\bg)_r}_{\min}$ and $p^{\cR(\bg)_r}_{\min}$, $r=0,1,\dots$, in a similar way. Indeed, the sequence $\smash{p^{\cQ(\bg)_r}_{\min}}$, $r=0,1,\dots$, corresponds to the Lasserre SOS hierarchy of SDP lower bounds for $p_{\min}$~\cite{Lass02b} , while the sequence $\smash{p^{\cR(\bg)_r}_{\min}}$, $r=0,1,\dots$ leads to a hierarchy of linear programming (LP) lower bounds for $p_{\min}$~\citep[see, e.g.,][]{lasserre2013lagrangian}.

Under certain compactness assumptions on $S_\bg$, \emph{\germansF}~\citep[see, e.g.,][]{laurent2009sums,lasserre2015introduction} guarantee the asymptotic convergence of these lower bound sequences to $p_{\min}$ by ensuring the existence of a nonnegative certificate for the polynomial $f(\bx) :=p(\bx)- p_{\min} + \varepsilon$ for all $\varepsilon >0$ (which is positive on $S_{\bg}$). In Theorem~\ref{thm:certificates} below, we state three examples of well-known \germansF, as well as a Positivstellensatz recently derived in~\citep[Prop. 3.5]{kuryatnikova2024reducing}, which are particularly relevant for this work.

To simplify the presentation, we introduce the following notation:  for any vector $a \in \R^l$ and an array of polynomials $\mathbf{q} = [q_1, \dots, q_l] \in \R[\bx]^l$, the notation $a-\mathbf{q}$ denotes the array of polynomials $[a_1-q_1, \dots, a_l - q_l]$. The same convention applies to expressions such as $\mathbf{q}-a$ or $a+\mathbf{q}$, and $a \pm \mathbf{q}$ denotes the array $[a_1-q_1, a_1+q_1,\dots, a_l - q_l, a_l + q_l]$. This notation may also be used when $a \in \R$, instead of $a\mathbbm{1}$, where $\mathbbm{1}$ is the vector of all ones. The expression $f > 0$ on $S_\bg$ indicates that $f(x) > 0$ for all $x \in S_{\bg}$.

\begin{theorem}
\label{thm:certificates} Let $f \in \R[\bx]$, and polynomials $g_1,\dots,g_m \in \R[\bx]$ be given such that $S_\bg = \{x \in \R^n: g_j(x) \ge 0, j=1,\dots,m\}$ is compact.
Assume  $f > 0$ on $S_\bg$.
\begin{enumerate}
\item \underline{Schm\"{u}dgen’s Positivstellensatz}~\cite{schmudgen1991k}:
\label{thm:schmudgen}
Then, $f \in \cT(\bg)_r$ for some~$r\ge 0$.
\item \underline{Putinar’s Positivstellensatz}~\cite{putinar1993positive}: Assume the \emph{quadratic module} $\cQ(\bg)_{\infty}$ is Archimedean; that is, there is $r_0 \in \N$ and $N \in \R$ such that $N - \sum_{i=1}^n \bx_i^2 \in \cQ(\bg)_{r_0}$. Then,
 $f \in \cQ(\bg)_r$ for some $r\ge 0$. \label{it:cert2}
\item \underline{Krivine-Stengle’s Positivstellensatz}~\cite[see, e.g.,][p. 25]{Marshall2008}: Assume $1-g_j \ge 0$ on $S_\bg$ for $j=1,\dots,m$ and
$\{1,g_1,\dots,g_m\}$ generates $\R[\bx]$.
Then, $f \in \cR(\bg,1-\bg)_{r}$  for some $r \ge 0$.
\label{it:KS}
\item \underline{Extended-Handelman’s Positivstellensatz}~\citep[Prop. 3.5]{kuryatnikova2024reducing}:  Assume $S_\bg \subseteq [-1,1]^n$.
Then,
$f \in \cR(1\pm \bx, \bg)_r$ for some $r\ge 0$.
\label{it:nosos}
\end{enumerate}
\end{theorem}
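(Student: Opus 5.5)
These four statements are classical (items (i)--(iii)) or proved in \citep[Prop. 3.5]{kuryatnikova2024reducing} (item (iv)). If I had to prove them myself, I would derive all of them from one abstract tool: the Kadison--Dubois / Jacobi representation theorem for Archimedean cones in $\R[\bx]$. The version I would use says the following. Let $M \subseteq \R[\bx]$ be a quadratic module, or a preprime containing $\R_+$, and suppose it is Archimedean, i.e.\ for every $q$ there is $N$ with $N \pm q \in M$. Let $K_M$ be the set of points $x \in \R^n$ at which every element of $M$ is nonnegative. Then every $f$ with $f>0$ on $K_M$ belongs to $M$. (For preprimes, the proof goes through characters $\R[\bx]\to\R$ that are nonnegative on $M$; these are exactly point evaluations at $K_M$.) With this tool, each item reduces to two checks: that $K_M = S_\bg$, and that $M$ is Archimedean. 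Membership in some truncation $\cdot_r$ is then automatic, since every element of the full cone has finite degree.

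\emph{Item (ii).} This is immediate. For $M=\cQ(\bg)_\infty$ we have $K_M=S_\bg$. Archimedeanity follows from the standard fact that the set $H_M=\{q : N\pm q\in M \text{ for some } N\}$ is a subring of $\R[\bx]$. The hypothesis $N-\sum_i \bx_i^2\in M$ gives $\bx_i\in H_M$ for all $i$, via $N' \pm \bx_i = \tfrac12\big((\bx_i\pm 1)^2 + (N-\sum_k \bx_k^2) + \sum_{k\ne i}\bx_k^2\big)$ suitably arranged. Hence $H_M=\R[\bx]$.

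\emph{Items (iii) and (iv).} Here $M$ is a preprime, and the same ring argument applies.

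For (iv), take $M=\cR(1\pm\bx,\bg)_\infty$. It contains $1\pm \bx_i$, so $\bx_i\in H_M$ for every $i$, and therefore $H_M=\R[\bx]$. Its positivity set is $S_\bg\cap[-1,1]^n=S_\bg$, using the assumption $S_\bg\subseteq[-1,1]^n$.

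For (iii), take $M=\cR(\bg,1-\bg)_\infty$. It contains both $g_j$ and $1-g_j$, so $g_j\in H_M$ for every $j$. Since $\{1,g_1,\dots,g_m\}$ generates $\R[\bx]$ as an algebra and $H_M$ is a ring, $H_M=\R[\bx]$. Its positivity set is $\{x : 0\le g_j(x)\le 1\}$, which equals $S_\bg$ because $1-g_j\ge 0$ on $S_\bg$.

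\emph{Item (i): the main obstacle.} For Schmüdgen's theorem, the hard step is showing that the preorder $T=\cT(\bg)_\infty$ is Archimedean using only compactness of $S_\bg$, with no Archimedean hypothesis available. My plan is as follows. Choose $N$ with $N-\sum_i \bx_i^2>0$ on $S_\bg$. Apply the Krivine--Stengle (abstract) Positivstellensatz to this strict inequality to obtain an identity $\big(N-\sum_i\bx_i^2\big)\,s = 1+t$ with $s,t\in T$. Then, following Schmüdgen's original argument or Wörmann's simplification, bootstrap this identity: show that $N'-\sum_i \bx_i^2$ lies in $T$ for some larger $N'$, using that $H_T$ is a ring and manipulating powers of $(1+\sum_i \bx_i^2)$. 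Once $T$ is known to be Archimedean, the representation theorem gives (i), since $K_T=S_\bg$.

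I expect this bootstrapping step to be the delicate part of the whole argument. Everything else reduces to the ring property of $H_M$ together with the identification of the characters of $M$ with the points of $S_\bg$.
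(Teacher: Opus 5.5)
Your proposal is correct. It necessarily takes a different route from the paper, because the paper gives no proof of this theorem: it quotes the four results from the literature and uses them only qualitatively, for instance to get some finite $r_0$ with $1\pm\bx\subset\cT(\bg)_{r_0}$ in the proof of \Cref{thm:ourschmu}.

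Your reduction to the Kadison--Dubois/Jacobi representation theorem is sound. In all four cases the positivity set is $S_\bg$. For (ii)--(iv), Archimedeanity follows from $H_M$ being an $\R$-subalgebra; for preprimes this comes from $(n\pm a)(n+b)+(n\mp a)(n-b)=2n^2\pm 2ab$, and your verifications are right.

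The one step you leave as a pointer, Archimedeanity of $T=\cT(\bg)_\infty$ in (i), closes in a few lines by Wörmann's argument, with no powers of $1+\sum_i\bx_i^2$.
\begin{itemize}
\item Let $p=N-\sum_i\bx_i^2$ and $ps=1+t$. Then $p(1+t)=p^2s\in T$.
\item The quadratic module $T+pT$ contains $p$, so it is Archimedean. Hence $k-t=a+pb$ with $a,b\in T$.
\item Multiplying by $1+t$ gives $(k-t)(1+t)=c-(t-\tfrac{k-1}{2})^2\in T$ with $c=k+\tfrac{(k-1)^2}{4}$. So $t\in H_T$; say $\ell-t\in T$.
\item Then
\[
N(1+\ell)-\sum_i\bx_i^2 \;=\; p(1+t)+N(\ell-t)+t\sum_i\bx_i^2\in T .
\]
\end{itemize}

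As for what each approach buys: citation is all the paper needs. Your derivation instead shows the four statements as instances of one representation theorem, and it puts the real difficulty in the Archimedean property, which is nontrivial only for Schm\"udgen.

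Neither approach gives any control on $r$, and that is exactly what the lift-and-project machinery adds. Theorem~\ref{thm:enhLift} combined with Theorem~\ref{thm:HandBox}, as in \Cref{thm:genHand}, already gives an effective proof of (iv) for nonempty $S_\bg$. That proof relies only on the quantitative Handelman theorem on the box, not on the representation theorem.
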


The impact of optimization hierarchies based on {\germansF} such as the ones in Theorem~\ref{thm:certificates} in the areas of control, robotics, combinatorics, finance, among others, is well-documented~\citep[see, e.g.,][]{magron2023sparse, nie2023moment,lasserre2015introduction}. A relevant theoretical question concerns the convergence rates of these hierarchies~\citep[see, e.g.,][]{baldi2024degree, laurent2023effective,baldi2023effective, baldi2025lojasiewicz, magron2015error, laurent2023overview, magron2025convergence}  or equivalently determining for which hierarchy level $r$ the objective value of the corresponding conic problem is within $\varepsilon > 0$ of $p_{\min}$. 
This question can be answered by obtaining {\em degree bounds} for the representation of the polynomial $f(\bx) := p(\bx) - p_{\min} + \varepsilon$ for any $\varepsilon > 0$ given by the {\germanF} associated to the hierarchy~\citep[see, e.g.,][Obs.~1]{laurent2023overview}. These bounds depend on the characteristics of $\bg$ and a {\em condition measure} of $f$, which is the ratio between an appropriate norm for $f$ and $\varepsilon = f_{\min}$. Usual norms are the sup-norm
\begin{equation}
\label{eq:supnorm}
f_{\max,D}:= \sup\{|f(x)|: x \in D\},
\end{equation}
for some appropriate set $D \subset \R^n$ and  norms on the vector of coefficients of the polynomial. Given a polynomial $f(\bx) = \sum_{\{\alpha \in \N^n: \|\alpha_1\| \le d\}} f_{\alpha} \bx^{\alpha}$ of degree $d$, we use the $\ell_1$ coefficient norm and the $\ell_{\infty}$ coefficient norm, respectively given by
\[
\normone{f} = \sum_{\{\alpha \in \N^n: \|\alpha_1\| \le d\}} |f_{\alpha}| \qquad \text{ and } \qquad \|f\|_{\infty,\coeff} = \max\{|f_{\alpha}|: \alpha \in \N^n: \|\alpha_1\| \le d\}.
\]
Also, we use the weighted coefficient norm
\begin{equation}
\label{eq:wnorm}
L(f) := \max_{\{\alpha \in \N^n: \|\alpha\|_1 \le d\}} |f_\alpha|\frac{\alpha!}{|\alpha|!}.
\end{equation}

Initial results in this direction focused on obtaining degree bounds for \germansF\ over simple sets. Specifically, these include the hypercube $[-1,1]^n$~\citep{baldi2024degree, laurent2023effective}, the binary cube $\{0,1\}^n$~\citep{slot2023sum}, the $(n-1)$-simplex $\Delta^{n-1} =\{ x \in \R^n_+: \sum_{i=1}^n x_i = 1\}$~\cite{kirschner2022convergence}, the $n$-simplex $\Delta^{n} = \{ x \in \R^n_+: \sum_{i=1}^n x_i \le 1\}$~\cite{slot2022sum}, the unit sphere $S^{n-1}: = \{x \in \R^n : \sum_{i=1}^n x_i^2 = 1\}$~\citep{fang2021sum, kirschner2022convergence}, and the unit ball $B^n = \{x \in \R^n : \sum_{i=1}^n x_i^2 \le 1\}$~\cite{slot2022sum}. Some of these results are listed in Table~\ref{tab:bounds}, using a format similar to~\citep[][Table~2.1]{slot2022asymptotic}.
\begin{table}[!ht]
\centering
\begin{tabular}{llll}
\toprule
\textbf{$S_\bg$ (compact)} & \textbf{Degree Bound} & \textbf{Positivstellensatz} & \textbf{reference} \\
\midrule
$B^{n}$ &  $\bigO\left ( \left (\frac{f_{\max, B^n}}{f_{\min}}\right )^{\tfrac{1}{2}} \right)$ & Putinar/Schm\"{u}dgen & \cite[][Thm. 3]{slot2022sum} \\
$[-1,1]^n$      & $\bigO\left(\left(\frac{f_{\max, [-1,1]^n}}{f_{\min}}\right)\right)$ & Putinar  & \cite[][Thm. 3]{baldi2024degree}\\
$[-1,1]^n$        & $\bigO\left ( \left (\frac{f_{\max, [-1,1]^n}}{f_{\min}}\right )^{\tfrac{1}{2}} \right)$ & Schm\"{u}dgen & \cite[][Cor. 3]{laurent2023effective} \\
$\Delta^{n}$ &  $\bigO\left ( \left (\frac{f_{\max, \Delta^n}}{f_{\min}}\right )^{\tfrac{1}{2}} \right)$ & Schm\"{u}dgen & \cite[][Thm. 4]{slot2022sum} \\
\midrule
Archimedean & $\bigO \left ( e^{\left ( \frac{L(f)}{f_{\min}} \right )^{c_\bg}} \right)$ & Putinar & \citep[][Thm. 6]{nie2007complexity} \\
Archimedean &  $\bigO \left ( \left (\frac{f_{\max, [-1,1]^n}}{f_{\min}} \right )^{2.5n\Ll_\bg} \right )$ & Putinar  & \citep[][Thm. 1.7]{baldi2023effective} \\
Archimedean	& $\bigO \left ( \left (\frac{f_{\max, \Delta^n}}{f_{\min}} \right )^{7\Ll_\bg +3} \right )$		         & Putinar & \citep[][Cor. 3.3]{baldi2025lojasiewicz}\\
Archimedean &  $\bigO \left ( \left (\frac{f_{\max, [-1,1]^n}}{f_{\min}} \right )^{2\Ll_\bg} \right )$ & Putinar  & see \Cref{thm:ourputi} \\
\midrule
General     &  $\bigO \left ( \left ( \frac{\| f \|_{\operatorname{\infty,\coeff}}}{f_{\min}} \right )^{c_\bg} \right)$
 & Schm\"{u}dgen & \citep[][Thm. 3]{ComplexitySchm}\\
General &  $\bigO \left ( \left (\frac{f_{\max, [-1,1]^n}}{f_{\min}} \right )^{\Ll_\bg} \right )$ & Schm\"{u}dgen  & see \Cref{thm:ourschmu} \\
General & $\bigO \left ( \left (\frac{f_{\max, [-1,1]^n}}{f_{\min}} \right )^{2\Ll_\bg} \right )$  & Krivine-Stengle  & see \Cref{thm:degbdKS} \\
General & $\bigO \left ( \left (\frac{f_{\max, [-1,1]^n}}{f_{\min}} \right )^{2\Ll_\bg} \right )$  & Extended-Handelman & see \Cref{thm:genHand}\\
\bottomrule
\end{tabular}

\caption{Relationship between the condition number of a polynomial $f > 0$  on $S_{\bg}$ and the degree bound of its associated {\germansF}, where
$\Ll_\bg$ denotes the {\L}ojasiewicz exponent associated with the set $S_\bg$ (see \eqref{eq:LojDist}).
\label{tab:bounds}}
\end{table}

Although progress has been made on deriving analogous degree bounds for {\germansF} over more general compact semialgebraic sets; as referenced in Table~\ref{tab:bounds}, such bounds are not as strong as the ones derived for simple sets, leaving the door open for potential enhancements.

The main technique used
 to obtain degree bounds for {\germansF} over general semialgebraic sets~\citep[see,][]{ComplexitySchm, baldi2025lojasiewicz, baldi2023effective} is the {\em algebro-geometric reduction} analysis~\citep[see, e.g.,][Sec. 3.2 for a detailed discussion]{laurent2023overview}. Let $f(\bx) > 0$ on $S_\bg$. To obtain a degree bound on Putinar's {\germanF} for $f$,
 first scale $S_\bg$ so it is contained in a simpler set, such as the hypercube $[-1,1]^n$. Next, using bounds given by the {\em {\L}ojasiewicz inequality}~\citep{loja} (see Section~\ref{sec:loja} for details), construct a polynomial $q \in \cQ(\bg)_{\newell}$ such that $f-q > 0$ on $[-1,1]^n$. Then, apply existing {\schmu}'s {\germanF} degree bounds over the hypercube, to find~$r$ such that $f-q \in \cT(1 \pm \bx)_r$. Finally, use a relationship of the form $\cT(1 \pm x)_r \subseteq \cQ(\bg)_{r+\newell'}$ which implies the desired degree bound $f = (f-q) + q \in \cQ(S_{\bg})_{\max\{r+\newell', \newell\}}$. Improved degree bounds obtained using this approach~\cite{, baldi2025lojasiewicz, baldi2023effective} can be mainly attributed to the way in which the polynomial $q$ above is constructed~\cite[][Sec. 3.2.1]{laurent2023overview}.

\subsection*{Contributions}
We derive new degree bounds for {\germansF} over general compact semialgebraic sets (see Table~\ref{tab:bounds}). To accomplish this, we propose a {\em lift-and-project} approach for polynomial optimization. This approach leverages degree bounds for {\germansF} on the hypercube but differs fundamentally from algebro-geometric reduction methodologies. The approach is motivated by the results in~\cite{kuryatnikova2024reducing}. It adopts principles from classical mixed-integer programming (MIP) in which the MIP is lifted into a higher-dimensional space by adding new variables. This variables capture interactions between original variables, and by imposing relaxed consistency conditions they allow to represent convex hulls more accurately. Projecting back to the original space stronger LP relaxations of the MIP are obtained. ~\citep[see, e.g.,][]{BalaCC93, sherali2013reformulation}.

Section~\ref{sec:liftintro} details and illustrates the lift-and-project approach. Here, we summarize:
We begin by scaling $S_\bg$ so it is contained in the hypercube $[-1,1]^n$. Next, given $f \in \R[x]$ positive on $S_{\bg}$, we add $m$ new variables $u_1, \ldots, u_m$, one for each constraint $g_j(x) \ge 0$, $j=1,\dots,m$. We construct $F \in \R[\bx,\bu]$ as a \emph{lifted} version of $f$. This lifted polynomial is positive on $T := [-1,1]^n\times[0,1]^m$. The key aspect of this construction is that eliminating the new variables by the substitution $\bu = \bg(\bx)$ projects $T$ onto $S_g$ and $F(\bx,\bu)$ onto $f(\bx)$. More specifically, $S_\bg =\{x \in \R^n: (x,\bg(x)) \in T\}$, and $f(\bx) = F(\bx,\bg(\bx))$. By applying an effective {\germanF} over $T$, we obtain a certificate of nonnegativity for $F$ over $T$ with bounded degree. After substituting $\bu = \bg(\bx)$, we project this certificate to a certificate of the same type for the nonnegativity of~$f$ over $S_\bg$. For example, if $F \in \cQ(1\pm \bx,\bu,1-\bu)_r$, the method yields $f\in \cQ(\bg)_{ar+b}$, where $a$ and $b$ depend only on $\bg$.

We apply the lift-and-project approach to obtain new degree bounds (see Table~\ref{tab:bounds}). Let $f >0$ on $S_\bg$ and $\Ll_\bg$ denote the {\L}ojasiewicz exponent of $\bg$ (see \eqref{eq:LojDist}).
\begin{enumerate}
\item We improve the current best Putinar's Positivstellensatz degree bound from $\bigO((f_{\max,\Delta^n}/f_{\min})^{7\Ll_\bg+3})$~\citep[][Cor. 3.3]{baldi2025lojasiewicz}  to $\bigO((f_{\max,[-1,1]^n}/f_{\min})^{2\Ll_\bg})$ (\Cref{thm:ourputi}).
\item We obtain a $\bigO ( (f_{\max, [-1,1]^n}/f_{\min} )^{\Ll_\bg} )$ degree bound on Schm\"{u}dgen's Positivstellensatz (\Cref{thm:ourschmu}). Notably, the degree bounds for Putinar's and {\schmu}'s {\germansF} over general semialgebraic sets match the same quadratic improvement as for the hypercube $[-1,1]^n$ (see Table~\ref{tab:bounds}).
\item We obtain $\bigO ( (f_{\max, [-1,1]^n}/f_{\min}  )^{2\Ll_\bg}  )$ degree bounds on Krivine-Stengle's Positivstellensatz (\Cref{thm:degbdKS}) and the extended-Handelman's Positivstellensatz (\Cref{thm:genHand}). These results provide the first explicit degree bounds for linear optimization-based hierarchies over general compact semialgebraic sets that use nonnegative constants to certify nonnegativity. Interestingly, the degree bounds are similar to those for Putinar's and Schm\"{u}dgen's {\germansF} which use SOS to certify nonnegativity.
\end{enumerate}

Above, it is important to note that  $\Ll_\bg =1$ holds under common regularity assumptions~\citep[][Thm.~2.11]{baldi2023effective}.

\subsection*{Outline} The article is organized as follows. Section~\ref{sec:prelim} begins by presenting basic algebraic results for nonnegative polynomials. We then present results on the {\L}ojasiewicz inequality~\cite{loja}. The section finishes with an illustrative description of the lift-and-project procedure. This procedure is formalized in Section~\ref{sec:main} (see Theorem~\ref{thm:enhLift}), where we apply this generic result to obtain improved degree bounds for Putinar (Section~\ref{sec:Putinar}), Schm\"{u}dgen's (Section~\ref{sec:Schmu}), Krivine-Stengle's, and extended-Handelman's (Section~\ref{sec:both}) {\germansF}. Section~\ref{sec:final} concludes with final remarks and future directions.

\section{Preliminaries}
\label{sec:prelim}
In this section, we first (Section~\ref{sec:loja}) introduce tools associated to the {\L}ojasiewicz inequality~\cite{loja} --- commonly employed in related articles~\citep[see, e.g.][]{baldi2023effective, baldi2025lojasiewicz} --- and use it to develop a semialgebraic description of the distance to the set $S_{\bg}$  that is crucial to derive the degree bounds presented here (see~\eqref{eq:Hdef}). Then, we draw on the approach by \citet{kuryatnikova2024reducing} to introduce the lift-and-project procedure for polynomial optimization.
This procedure is fundamental to obtain our main result (\Cref{thm:enhLift}) which is used to derive degree bounds for different {\germansF} for general compact semialgebraic sets (Sections~\ref{sec:Putinar}--~\ref{sec:both}).

From now on, for any polynomial $f$, it is convenient to use $\normsup{f}$ to denote the sup-norm $f_{\max,[-1,1]}$ and
\begin{equation}
\label{eq:kappadef}
\kappa_{S_\bg}(f) = \tfrac{\normsup{f}}{f_{\min}},
\end{equation}
to denote the condition number of a polynomial $f > 0$  on $S_{\bg}$. Notice that for any compact set $S_\bg \subseteq [-1, 1]^n$,  we have
\begin{equation}
\label{eq:kapparange}
1 \le \kappa_{S_\bg}(p) < \infty.
\end{equation}

\subsection{Algebra of nonnegativity certificates}
We present some basic algebraic results related to nonnegativity certificates.

\begin{lemma}\label{lem:SubsQTR} Let $r_0 \ge 0$. Let $\bg \in \R[x]^m$ and $\mathbf{h} \in \R[x]^{m'}$.
\begin{enumerate}
\item If $\mathbf{h} \subset \cQ(\bg)_{r_0}$, then
\( \cQ(\bg,\mathbf{h})_r \subseteq \cQ(\bg)_{r_0+r}.
\) for all $r >0$. \label{it:lem1}
\item If $\mathbf{h}\subset \cT(\bg)_{r_0}$, then
\( \cT(\bg,\mathbf{h})_r \subseteq \cT(\bg)_{r_0 r}
\)
 for all $r >0$. \label{it:lem2}
 \item If $\mathbf{h}\subset \cR(\bg)_{r_0}$, then
\( \cR(\bg,\mathbf{h})_r \subseteq \cR(\bg)_{r_0 r}
\)
 for all $r >0$. \label{it:lem3}
\end{enumerate}
\end{lemma}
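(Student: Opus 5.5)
The plan is to prove all three items the same way: substitute the given certificates of the $h_k$ into a certificate for $(\bg,\mathbf{h})$, then track degrees. Two structural facts are used throughout. First, $\Sigma[\bx]$ is closed under products. Second, products of generators stay generators: $\bg^{\alpha}\bg^{\alpha'} = \bg^{\alpha\vee\alpha'}\,(\bg^{\alpha\wedge\alpha'})^2$ for $\alpha,\alpha'\in\{0,1\}^m$, and $\bg^\alpha\bg^{\alpha'}=\bg^{\alpha+\alpha'}$ for $\alpha,\alpha'\in\N^m$. Hence $\cT(\bg)_a\cdot\cT(\bg)_b\subseteq\cT(\bg)_{a+b}$ and $\cR(\bg)_a\cdot\cR(\bg)_b\subseteq\cR(\bg)_{a+b}$. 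The quadratic module is only closed under multiplication by SOS, which is precisely why item (i) gets an additive bound.

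For item (i), take an element $\sum_{j=0}^m\sigma_jg_j+\sum_{k}\tau_kh_k$ of $\cQ(\bg,\mathbf{h})_r$ and write each $h_k=\sum_j\sigma'_{kj}g_j$ with $\deg(\sigma'_{kj}g_j)\le r_0$. Then
\[
\tau_kh_k=\sum_j(\tau_k\sigma'_{kj})g_j,
\]
and each coefficient $\tau_k\sigma'_{kj}$ is SOS. Moreover $\deg\tau_k\le r-\deg h_k\le r$, so $\deg(\tau_k\sigma'_{kj}g_j)\le r+r_0$. Note that the bound is taken on the representation of $h_k$, not on $\deg h_k$ itself, since cancellation may occur. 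Collecting terms by $g_j$ gives membership in $\cQ(\bg)_{r_0+r}$.

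For items (ii) and (iii), take a term $\sigma\,\bg^\alpha\mathbf{h}^\beta$, with $\beta\in\{0,1\}^{m'}$ or $\beta\in\N^{m'}$ respectively, and $\sigma$ either SOS or a nonnegative scalar. Replace each factor $h_k$ by its certificate of degree $\le r_0$. By the closure facts above, $\mathbf{h}^\beta$ lies in $\cT(\bg)_{r_0|\beta|}$ (resp.\ $\cR(\bg)_{r_0|\beta|}$). Multiplying by $\sigma\bg^\alpha$ then gives a representation of degree at most
\[
\deg\sigma+\deg\bg^\alpha+r_0|\beta|.
\]
The key estimate is $|\beta|\le\deg\mathbf{h}^\beta$, which holds when every $h_k$ has degree $\ge 1$. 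Using $\deg(\sigma\bg^\alpha)+\deg\mathbf{h}^\beta\le r$ and $r_0\ge1$, we get
\[
\deg(\sigma\bg^\alpha)+r_0|\beta|\;\le\;\deg(\sigma\bg^\alpha)+r_0\bigl(r-\deg(\sigma\bg^\alpha)\bigr)\;\le\; r_0r.
\]

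The main obstacle is this degree bookkeeping in the multiplicative cases, in particular the degenerate situations where the estimate $|\beta|\le\deg\mathbf{h}^\beta$ fails.
\begin{itemize}
\item If some $h_k$ is a constant, it must be nonnegative: it lies in $\cT(\bg)_{r_0}\subseteq\P(S_\bg)$ and $S_\bg\neq\emptyset$. I would absorb it into the scalar or SOS multiplier $\sigma$ before substituting, so only nonconstant $h_k$ contribute to $|\beta|$.
\item If $r_0<1$, all $h_k$ are constants, which is the previous case. Otherwise I would note that one may take $r_0\ge1$ without loss, although this bound then needs to be checked against the intended use in the statement.
\end{itemize}
With these cases removed, collecting terms yields membership in $\cT(\bg)_{r_0r}$, respectively $\cR(\bg)_{r_0r}$.
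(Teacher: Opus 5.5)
The paper states this lemma without proof, and your substitution argument is the natural one. Item (i) is correct as you wrote it, for every $r_0\ge 0$ and with no assumption on $S_\bg$. For items (ii) and (iii), three ingredients are all correct once $r_0\ge 1$: the closure facts, the membership $\mathbf{h}^\beta\in\cT(\bg)_{r_0|\beta|}$ (resp.\ $\cR(\bg)_{r_0|\beta|}$), and the estimate $\deg(\sigma\bg^\alpha)+r_0|\beta|\le r_0 r$. Absorbing constant $h_k$ into the multiplier is also fine. It uses the paper's standing assumption $S_\bg\neq\emptyset$, which makes such constants nonnegative.

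The step that fails is your treatment of $0\le r_0<1$. There every $h_k$ is indeed a nonnegative constant. But absorbing them only gives $\cT(\bg,\mathbf{h})_r\subseteq\cT(\bg)_r$ (resp.\ $\cR(\bg)_r$), and this is not inside $\cT(\bg)_{r_0 r}$ because $r_0 r<r$. Likewise, ``one may take $r_0\ge 1$ without loss'' is not without loss: replacing $r_0$ by $\max\{r_0,1\}$ weakens the conclusion.

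In fact no argument can close this case, because items (ii) and (iii) are false for $r_0<1$. Take $\mathbf{h}=[1]$, which lies in $\cT(\bg)_0$ and in $\cR(\bg)_0$.
\begin{itemize}
\item For (ii): $\bx_1^2\in\cT(\bg,\mathbf{h})_2$, but every element of $\cT(\bg)_{2r_0}$ has degree at most $2r_0<2$.
\item For (iii): any $g_j$ of degree $e\ge 1$ lies in $\cR(\bg,\mathbf{h})_e$, but every element of $\cR(\bg)_{r_0 e}$ has degree at most $r_0 e<e$.
\end{itemize}
So the lemma needs the extra hypothesis $r_0\ge 1$ in (ii)--(iii), or else the weaker conclusion with $\max\{r_0,1\}\,r$. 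This is harmless for the paper, since every application has $r_0\ge 1$; for instance, any certificate for $1\pm\bx_i$ has degree at least $1$. If you state that hypothesis explicitly instead of the claimed reduction, your proof is complete.
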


The proof of the following lemma readily follows from the proofs of similar related results in~\citep[][Thm. 3.3]{de2026link} and ~\citep[][Lem. 2.7]{gribling2026revisiting}.

\begin{lemma}
\label{lem:norm1f-f}
Let $f \in \R[\bx]$ with $\deg f = d$. Then
\begin{enumerate}
\item $\normone{f} - f  \in \cR(1\pm \bx)_{d}$ \label{normfit:one}
\item $\normone{f}  - f  \in \cT(1\pm \bx)_{d}$  \label{normfit:two}
\item $\normone{f}  - f  \in \cQ(1\pm \bx)_{2d+1}$  \label{normfit:three}
\end{enumerate}
\end{lemma}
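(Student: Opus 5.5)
The plan is to reduce all three items to a statement about single monomials. Write $f=\sum_{|\alpha|\le d} f_\alpha \bx^\alpha$ and $s_\alpha=\operatorname{sign}(f_\alpha)\in\{-1,0,1\}$. Then
\[
\normone{f}-f=\sum_{|\alpha|\le d}|f_\alpha|\,\bigl(1-s_\alpha \bx^\alpha\bigr).
\]
Each of the three sets is a convex cone that is monotone in the truncation degree. It therefore suffices to show that $1\pm \bx^\alpha$ lies in $\cR(1\pm\bx)_{|\alpha|}$, in $\cT(1\pm\bx)_{|\alpha|}$, and in $\cQ(1\pm\bx)_{2|\alpha|+1}$, respectively. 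The case $\alpha=0$ is trivial, since $1\pm1\in\{0,2\}$ is a nonnegative constant.

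For item (i), I would induct on $|\alpha|$. Write $\bx^\alpha=\bx_i\bx^\beta$ with $|\beta|=|\alpha|-1$, and use the identities
\[
1-ab=\tfrac12\bigl[(1-a)(1+b)+(1+a)(1-b)\bigr],\qquad 1+ab=\tfrac12\bigl[(1+a)(1+b)+(1-a)(1-b)\bigr],
\]
with $a=\bx_i$ and $b=\bx^\beta$. By the induction hypothesis, $1\pm\bx^\beta$ is a nonnegative combination of products $(1\pm\bx)^\gamma$ with $|\gamma|\le|\beta|$. Multiplying by $1\pm\bx_i$ keeps such a combination inside the preprime and raises the degree by exactly one. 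Hence $1\pm\bx^\alpha\in\cR(1\pm\bx)_{|\alpha|}$.

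For item (ii), I would show $\cR(1\pm\bx)_d\subseteq\cT(1\pm\bx)_d$. Take a generator $\prod_i(1-\bx_i)^{a_i}(1+\bx_i)^{b_i}$ and split each exponent into its parity plus an even part. The even parts form a square $\sigma$, and the remaining product has $0/1$ exponents. The total degree is unchanged, so each generator is of the form $\sigma\,\bg^\alpha$ with $\alpha\in\{0,1\}^{2n}$, which is an admissible term of the truncated preorder.

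For item (iii), only linear combinations of single generators with SOS multipliers are allowed, so I would proceed differently. First, note
\[
1\pm\bx^\alpha=\tfrac12(1\pm\bx^\alpha)^2+\tfrac12\bigl(1-\bx^{2\alpha}\bigr).
\]
The square term has degree $2|\alpha|$, so it remains to show $1-\bx^{2\alpha}\in\cQ(1\pm\bx)_{2|\alpha|+1}$. The base case is
\[
1-\bx_i^2=\tfrac12\bigl[(1-\bx_i)^2(1+\bx_i)+(1+\bx_i)^2(1-\bx_i)\bigr]\in\cQ(1\pm\bx)_3.
\]
The induction step writes $\bx^{2\alpha}=\bx_i^2\bx^{2\beta}$ and uses
\[
1-\bx_i^2\bx^{2\beta}=(1-\bx_i^2)+\bx_i^2\bigl(1-\bx^{2\beta}\bigr).
\]
Multiplying a quadratic-module element of degree $2|\beta|+1$ by the square $\bx_i^2$ stays in the module, with degree $2|\alpha|+1$.

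The only step needing real care is (iii), because products of generators are unavailable there. The squaring trick above, combined with the degree-$3$ certificate for $1-\bx_i^2$, is what circumvents this and accounts for the bound $2d+1$.
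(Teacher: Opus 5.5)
Your proposal is correct and follows the paper's proof essentially step for step. The steps match as follows: the same sign expansion reducing everything to $1\pm\bx^\alpha$; the same product identities with induction on $|\alpha|$ for (i); the inclusion $\cR(1\pm\bx)_d\subseteq\cT(1\pm\bx)_d$ for (ii), which you justify more explicitly via parity splitting while the paper only asserts it; and the same squaring trick plus induction on $1-\bx^{2\alpha}$ for (iii). Your base-case identity $1-\bx_i^2=\tfrac12[(1-\bx_i)^2(1+\bx_i)+(1+\bx_i)^2(1-\bx_i)]$ is the correct form of the one the paper intends, since the paper's printed version contains a typo.
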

\begin{proof}
For any $f \in \R[\bx]$ with $\deg f = d$,
\begin{equation}
\label{eq:expand}
\normone{f}  - f = \sum_{\{\alpha \in \N^n:\|\alpha\|_1 \le d\}} (|f_{\alpha}| - f_{\alpha} \bx^{\alpha}) = \sum_{\{\alpha \in \N^n:\|\alpha\|_1 \le d\}}  |f_{\alpha}|(1 - \operatorname{sign}(f_{\alpha})\bx^{\alpha}),
\end{equation}
thus, it is enough to show the results for $f = 1 \pm \bx^\alpha$.

For any $i \in \{1,\dots,n\}$ and $\alpha \in \N^n$ we have 
\[1 + \bx_i \bx^\alpha = \tfrac{1}{2}[(1 + \bx_i)(1+\bx^\alpha)+ (1 - \bx_i)(1-\bx^\alpha)] 
\text{ and } 1 - \bx_i \bx^\alpha = \tfrac{1}{2}[(1 - \bx_i)(1+\bx^\alpha)+ (1 + \bx_i)(1-\bx^\alpha)]  .\]
 By an inductive argument on $\|\alpha\|_1$ we obtain $1 \pm \bx^\alpha \subseteq \cR(1 \pm x)_{\|\alpha\|_1}$, proving~\ref{normfit:one}. By $\cR(1\pm \bx)_{d} \subset \cT(1\pm \bx)_{d}$ we obtain~\ref{normfit:two}.
To prove~\ref{normfit:three} we write
\[ 1 + \bx^\alpha = \tfrac{1}{2}[ (1 + \bx^\alpha)^2 + ( 1 - \bx^{2\alpha})]
 \text{ and }  1 - \bx^\alpha = \tfrac 12 [ (1 - \bx^\alpha)^2 + ( 1 - \bx^{2\alpha})].\]
Thus, it is enough to show $ 1 - \bx^{2\alpha} \in \cQ(1\pm \bx)_{2\|\alpha\|_1+1}$. We do this by induction. For the base case, notice that $1 - \bx_i^2 = \frac 12 \left( (1-\bx_i) (1 + \bx_i)^2 + (1+\bx_i)( 1 - \bx_i^{2}) \right)$ and for the inductive step, we use
\[
1 - \bx_i^2\bx^{2\alpha}  = (1-\bx_i^2) + \bx_i^2(1-\bx^{2\alpha}).
\]
\end{proof}

The next lemma is key to apply effective {\germansF} for $[-1,1]^n$ or $[0,1]^n$ to obtain effective {\germansF} on other hypercubes.

\begin{lemma}\label{lem:trans}
Let $S\subseteq \R^n$, $\bg \in \R[\bx]^m$, and $Z: \R^n \to \R^n$ be a polynomial transformation.
\begin{enumerate}
\item If $f(\bx)> 0$ on $Z(S)$ then $f(Z(\bx)) > 0$ on $S$. \label{it:trans1}
\item If $f\in \cQ(\bg)_r$ then $f(Z(\bx)) \in \cQ(\bg(Z(\bx))_{r\deg Z}$. \label{it:trans2}
\end{enumerate}
\end{lemma}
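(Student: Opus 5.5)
Both parts follow by substituting $Z(\bx)$ into the hypothesis directly. Write $Z=(Z_1,\dots,Z_n)$ with each $Z_i\in\R[\bx]$, and let $\deg Z:=\max_i \deg Z_i$. We read $\bg(Z(\bx))$ as the array $[g_1(Z(\bx)),\dots,g_m(Z(\bx))]$.

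For \ref{it:trans1}, fix $x\in S$. Then $Z(x)\in Z(S)$, so the hypothesis gives $f(Z(x))>0$. Since $x$ was arbitrary, $f\circ Z>0$ on $S$. Nothing more is needed here.

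For \ref{it:trans2}, start from a certificate $f=\sum_{j=0}^m \sigma_j g_j$ with $\sigma_j\in\Sigma[\bx]$, $g_0=1$ and $\deg(\sigma_j g_j)\le r$. Composition with $Z$ is a ring homomorphism $\R[\bx]\to\R[\bx]$, $q\mapsto q\circ Z$. Applying it to the certificate gives
\[
f(Z(\bx))=\sum_{j=0}^m \sigma_j(Z(\bx))\,g_j(Z(\bx)).
\]
Each $\sigma_j\circ Z$ is again SOS: if $\sigma_j=\sum_k q_k^2$, then $\sigma_j\circ Z=\sum_k (q_k\circ Z)^2$. Also $g_0\circ Z=1$. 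So this expression has the form required for membership in $\cQ(\bg(Z(\bx)))$, and only the degree condition remains to be checked.

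The one point needing care is that degree bound. For any polynomial $q$, we have $\deg(q\circ Z)\le \deg(q)\deg Z$, because each monomial $\bx^\alpha$ becomes $\prod_i Z_i^{\alpha_i}$, whose degree is at most $\|\alpha\|_1\deg Z$. Applying this to the product $q=\sigma_j g_j$ gives
\[
\deg\bigl((\sigma_j\circ Z)(g_j\circ Z)\bigr)=\deg\bigl((\sigma_j g_j)\circ Z\bigr)\le r\deg Z .
\]
It is important to bound the composed product as a whole rather than each factor separately; this is how the statement gets $r\deg Z$ and not a sum of separate bounds. (If $\deg Z=0$, the composed polynomials are constants and the bound is trivial.) Hence $f(Z(\bx))\in\cQ(\bg(Z(\bx)))_{r\deg Z}$. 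The same argument applies verbatim to $\cT$ and $\cR$, since products of the $g_j$ are also preserved under composition.
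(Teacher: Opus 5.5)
Your proof is correct. The paper states this lemma without proof, and your direct substitution is exactly the routine verification it leaves implicit: composition with $Z$ is a ring homomorphism that sends SOS to SOS and satisfies $\deg(q\circ Z)\le \deg(q)\deg Z$. Your extension to $\cT$ and $\cR$ is the form the paper silently uses in the proofs of \Cref{thm:ourschmu} and \Cref{thm:genHand}. Your emphasis on bounding the product as a whole is unnecessary, since bounding the factors separately gives the same $(\deg\sigma_j+\deg g_j)\deg Z=\deg(\sigma_j g_j)\deg Z\le r\deg Z$, but it is harmless.
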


\subsection{{\L}ojasiewicz bounds for polynomial optimization}
\label{sec:loja}
The {\L}ojasiewicz inequality~\cite{loja} is a commonly used tool in the derivation of effective Positivstellensatz  over generic compact semialgebraic sets~\citep[see, e.g.][]{baldi2023effective, baldi2025lojasiewicz}.

Given $g_1,\dots,g_m \in \R[x]$ such that $S_\bg$ is compact, define for any $x \in \R^n$ the \emph{violation} components
\[
g_j(x)\vio := \max \left \{0, -g_j(x) \right \},\qquad j=1,\dots,m,
\]
and the \emph{maximum violation} function
\begin{equation}
\label{eq:Gdef}
G(x) := \|[g_1(x)\vio, \dots, g_m(x)\vio]\|_\infty.
\end{equation}
which is continuous and semialgebraic.
Notice that $ G^{-1}(0) = S_\bg$. In particular, $G(x) = 0$ implies $\dist(x,S_\bg)=0$. The function~$G$ is referred as the algebraic distance to $S_\bg$.
We denote by $\Ll_\bg$ and $\cl_\bg$ the {\L}ojasiewicz exponent and constant corresponding to the pair $\dist(\cdot,S_\bg)$ and $G$ on $[-1,1]^n$, given by the {\em {\L}ojasiewicz inequality}~\citep{loja}:
\begin{equation}\label{eq:LojDist}
\dist(x,S_\bg)^{\Ll_\bg} \le \cl_\bg G(x).
\end{equation}

As the next lemma shows, the {\L}ojasiewicz exponent $\Ll_\bg$, which is the main parameter used to compare different degree bounds in Table~\ref{tab:bounds},
is invariant under positive scalings of $\bg$~\citep[see also, e.g.,][]{baldi2025lojasiewicz, baldi2023effective}.

\begin{lemma}\label{lem:scaled-loj}
Let \(c \in \R^m_{++}\) and \(\bg=[g_1,\dots,g_m] \in \R[\bx]^m\) be given. Let \(\tilde{\bg}:=[\tilde g_1,\dots,\tilde g_m]\) be defined by setting
$\tilde g_j=c_j  g_j$ for $j=1,\dots,m$. Then, 
\[
\Ll_{\tilde \bg}=\Ll_{\bg} \text{ and }
\frac{1}{\max_{j=1,\dots,m} c_j} \cl_{\bg} \le \cl_{\tilde \bg} \le \frac{1}{\min_{j=1,\dots,m} c_j}\cl_{\bg}.
\]
\end{lemma}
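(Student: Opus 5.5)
The plan is to compare the two maximum-violation functions directly and then read the {\L}ojasiewicz inequality for $\bg$ as one for $\tilde\bg$, and conversely. Let $G$ and $\tilde G$ be the maximum-violation functions~\eqref{eq:Gdef} for $\bg$ and $\tilde\bg$. Set $c_{\min}:=\min_j c_j>0$ and $c_{\max}:=\max_j c_j$.

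\emph{Step 1: the set does not change.} Since each $c_j>0$, we have $\tilde g_j(x)\ge 0$ if and only if $g_j(x)\ge 0$. Hence $S_{\tilde\bg}=S_\bg$, and the distance function $\dist(\cdot,S_{\tilde\bg})$ is the same as $\dist(\cdot,S_\bg)$.

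\emph{Step 2: sandwich the violations.} For each $j$, positive homogeneity of $t\mapsto\max\{0,-t\}$ gives $\tilde g_j(x)\vio = c_j\, g_j(x)\vio$. Taking the maximum over $j$ yields
\[
c_{\min}\,G(x)\le \tilde G(x)\le c_{\max}\,G(x)\qquad\text{for all } x\in[-1,1]^n .
\]

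\emph{Step 3: transfer the inequality and the exponent.} Suppose $\dist(x,S_\bg)^{\Ll_\bg}\le \cl_\bg G(x)$ on $[-1,1]^n$. Then
\[
\dist(x,S_{\tilde\bg})^{\Ll_\bg}\le \cl_\bg G(x)\le \frac{\cl_\bg}{c_{\min}}\,\tilde G(x).
\]
So the pair $(\Ll_\bg,\ \cl_\bg/c_{\min})$ is admissible for $\tilde\bg$. Symmetrically, using $G\le \tilde G/c_{\min}$ in one direction and $\tilde G\le c_{\max}G$ in the other, any admissible pair $(L,C)$ for $\tilde\bg$ yields the admissible pair $(L, C\,c_{\max})$ for $\bg$. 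Since multiplying by positive constants does not change the exponent, the set of admissible exponents is the same for $\bg$ and $\tilde\bg$, so $\Ll_{\tilde\bg}=\Ll_\bg$. With the exponent fixed, the constant for $\tilde\bg$ is at most $\cl_\bg/c_{\min}$, which is the upper bound in the statement. Applying the same reasoning in the reverse direction gives $\cl_\bg\le c_{\max}\cl_{\tilde\bg}$, which is the lower bound.

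\emph{Main obstacle.} The only delicate point is interpretive. The paper defines $\Ll_\bg$ and $\cl_\bg$ simply as "the" exponent and constant in~\eqref{eq:LojDist}. I will read them as the optimal pair: the infimal admissible exponent, and then the infimal constant for that exponent, both taken over $[-1,1]^n$. Under this reading the two-sided comparison of Step 2 transfers everything in Step 3 verbatim, because the domain $[-1,1]^n$ and the distance function are unchanged by the scaling. If instead the constant were only some admissible choice, the lower bound on $\cl_{\tilde\bg}$ would fail trivially, so I would state this optimality convention explicitly at the start of the proof.
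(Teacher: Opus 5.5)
Your proposal is correct and follows essentially the same route as the paper: use $S_{\tilde\bg}=S_\bg$ and the comparison $\tilde G\ge c_{\min}G$ to transfer the {\L}ojasiewicz inequality from $\bg$ to $\tilde\bg$, then interchange the roles of $\bg$ and $\tilde\bg$. Your explicit reading of $(\Ll_\bg,\cl_\bg)$ as the optimal pair, and your separate derivation of the lower bound $\cl_\bg\le c_{\max}\cl_{\tilde\bg}$, spell out what the paper leaves implicit.
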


\begin{proof}
Let $c_{\min} = \min_{j=1,\dots,m} c_j$. Define, 
\[
\tilde G(\bx) := \|\max \left \{0, - \tilde g_1(x) \right \}, \dots, \max \left \{0, - \tilde g_m(x) \right \}\|_\infty.
\ge 
c_{\min} G(\bx) 
\]
Using \(S_{\tilde \bg}=S_{\bg}\), 
\begin{equation}\label{eq:LgScaling}
\dist(\bx,S_{\tilde \bg})^{\Ll_{\bg}}
=
\dist(\bx,S_{\bg})^{\Ll_{\bg}}
\le
\cl_{\bg} G(\bx)
\le
\tfrac{1}{c_{\min}} \cl_{\bg} \tilde G(\bx),
\end{equation}
which shows that $\Ll_{\tilde \bg} \le \Ll_\bg$. 
Interchanging the roles of $\bg$ and $\tilde \bg$, using $g_j=\frac 1{c_j}  \tilde g_j$ for $j=1,\dots,m$,
we also obtain $\Ll_{\bg} \le \Ll_{\tilde \bg}$, which shows $\Ll_{\bg} = \Ll_{\tilde \bg}$.

Also, equation~\eqref{eq:LgScaling} gives the desired expression for the {\L}ojasiewicz constant.
\end{proof}

Now, for any $x \in \R^n$, consider the squared Euclidean norm of the violation vector
\begin{equation}
\label{eq:Hdef}
H(x): = \|[g_1(x)\vio, \dots, g_m(x)\vio]\|^2_2 = \sum_{j=1}^m (g_j(x)\vio)^2.
\end{equation}
From the fact that for any $a \in \R^m$, $\|a\|_\infty^2 \le \|a\|_2^2  \le m \|a\|_\infty^2$, we obtain
\begin{equation}\label{eq:norms}
G(x)^2 \ \le\ H(x) \ \le\ m  G(x)^2.
\end{equation}

Using these facts, we obtain the following error bound on the minimum of a polynomial.
\begin{proposition} \label{prop:bound}
Let $f \in \R[\bx]$ with $\deg f = d$, and $g_1,\dots,g_m \in \R[\bx]$ such that $S_\bg \subseteq [-1,1]^n$ and nonempty. If $f_{\min} > 0$, then for all $x \in [-1,1]^n$,
\[
\max\{0,f_{\min}-f(x)\}
\le \tfrac{1}{2}\cl_\bg^2(4d^2\kappa_{S_{\bg}}(f))^{2\Ll_\bg}f_{\min} H(x) + \tfrac 12 f_{\min} .
\]\end{proposition}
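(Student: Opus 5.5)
Fix $x\in[-1,1]^n$. If $f(x)\ge f_{\min}$ the left-hand side is $0$ and there is nothing to prove, so assume $f(x)<f_{\min}$. Then $x\notin S_\bg$. Let $y\in S_\bg$ be a nearest point, which exists because $S_\bg$ is compact and nonempty, and set $\delta:=\dist(x,S_\bg)=\|x-y\|_2$. The plan has three steps. First, bound $f_{\min}-f(x)\le f(y)-f(x)$ by a Lipschitz estimate in terms of $\delta$. Second, convert $\delta$ into $G(x)$ using the {\L}ojasiewicz inequality~\eqref{eq:LojDist}, and then into $H(x)$ via~\eqref{eq:norms}. Third, absorb the resulting square-root-type dependence using $ab\le \tfrac12(a^2+b^2)$, which produces the additive $\tfrac12 f_{\min}$.

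For the Lipschitz step, both $x$ and $y$ lie in the convex set $[-1,1]^n$, so the segment between them does too. By the mean value theorem,
\[
|f(y)-f(x)|\le \sup_{z\in[-1,1]^n}\|\nabla f(z)\|_2\,\delta .
\]
To bound the gradient I would apply Markov's inequality on $[-1,1]$ to each univariate restriction of $f$. This gives $|\partial_i f(z)|\le d^2\normsup{f}$ on $[-1,1]^n$, hence $\|\nabla f\|_2\le \sqrt n\, d^2\normsup{f}$. The stated constant $4d^2$ with no $\sqrt n$ suggests a slightly different route. One option is a directional Markov bound along the line through $x$ and $y$: the chord of $[-1,1]^n$ through them has length at least $\delta$, and one can rescale it. The other option is to bound the increment in $\ell_\infty$, since $\delta_\infty\le\delta\le 2\sqrt n$. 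Either way, the target is
\[
f_{\min}-f(x)\le 4d^2\normsup{f}\,\delta'
\]
for a suitable distance $\delta'\le\delta$ (or $\delta'$ comparable to $\delta$). Pinning down this constant cleanly is the step I expect to be the main obstacle. If necessary I would weaken the constant by using $\delta\le 2\sqrt n$ together with the slack available when $\Ll_\bg\ge1$.

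With $\kappa:=\kappa_{S_\bg}(f)$, the Lipschitz bound reads $f_{\min}-f(x)\le 4d^2\kappa f_{\min}\delta$. By~\eqref{eq:LojDist},
\[
\delta\le (\cl_\bg G(x))^{1/\Ll_\bg},
\]
and by~\eqref{eq:norms}, $G(x)\le H(x)^{1/2}$. It would be cleaner to avoid the fractional power. Since $f_{\min}-f(x)\le f_{\min}$ whenever $f(x)\ge0$, I would split into cases. If $4d^2\kappa\delta\le 1$, then
\[
\frac{f_{\min}-f(x)}{f_{\min}}\le \min\{1,4d^2\kappa\delta\}\le (4d^2\kappa\delta)^{\Ll_\bg},
\]
using $\Ll_\bg\ge1$, which holds since $\dist$ is Lipschitz while $G$ is only as regular as polynomials allow. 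This is at most $(4d^2\kappa)^{\Ll_\bg}\cl_\bg G(x)\le (4d^2\kappa)^{\Ll_\bg}\cl_\bg H(x)^{1/2}$. Writing $a=(4d^2\kappa)^{\Ll_\bg}\cl_\bg H(x)^{1/2}$ and $b=1$, the inequality $a\le \tfrac12 a^2+\tfrac12$ gives exactly
\[
f_{\min}-f(x)\le \tfrac12\cl_\bg^2(4d^2\kappa)^{2\Ll_\bg}f_{\min}H(x)+\tfrac12 f_{\min}.
\]

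The remaining case is $4d^2\kappa\delta>1$, or $f(x)<0$. Here $f_{\min}-f(x)\le f_{\min}+\normsup{f}\le 2\kappa f_{\min}$, which is not automatically below $f_{\min}$. So the case split should be done on the quantity $(f_{\min}-f(x))/f_{\min}$ itself. Call this ratio $t\le 4d^2\kappa\delta$. If $t\le1$, use $t\le t^{\Ll_\bg}$ as above. If $t>1$, use $t\le t^{\Ll_\bg}\le (4d^2\kappa\delta)^{\Ll_\bg}$ directly. Both cases then yield the same chain of inequalities, and the argument closes. The only delicate ingredient is therefore the Markov-type Lipschitz constant $4d^2\normsup{f}$.
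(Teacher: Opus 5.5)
Your skeleton (Lipschitz bound, then \eqref{eq:LojDist}, then \eqref{eq:norms}, then absorption) is the paper's, but two steps fail as written.

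\textbf{The absorption step.} For $t\in(0,1)$ and $\Ll_\bg>1$ one has $t^{\Ll_\bg}<t$. So ``if $t\le 1$, use $t\le t^{\Ll_\bg}$'' is backwards, and your bound $t\le (4d^2\kappa\delta)^{\Ll_\bg}$ is unjustified precisely in the regime $t<1$. As written, the argument only covers $\Ll_\bg=1$.

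Reorganizing the cases cannot repair this while you use the Lipschitz constant $4d^2$. Put $y:=4d^2\kappa\,\cl_\bg^{1/\Ll_\bg}H(x)^{1/(2\Ll_\bg)}$ with $\kappa=\kappa_{S_\bg}(f)$. Since $y^{2\Ll_\bg}=\cl_\bg^2(4d^2\kappa)^{2\Ll_\bg}H(x)$, the target reads $t\le \tfrac12 y^{2\Ll_\bg}+\tfrac12$. Your chain only yields $t\le y$. But $y\le \tfrac12 y^{2\Ll_\bg}+\tfrac12$ is false for $y$ slightly below $1$ when $\Ll_\bg>1$; for example, $\Ll_\bg=2$ and $y=0.9$ give $0.9>0.828$.

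The paper keeps the fractional power $H^{1/(2\Ll_\bg)}$ and uses Lipschitz constant $2d^2$, so that $t\le y/2$. It then applies \Cref{lem:bound x ala alfa} with $\alpha=1/(2\Ll_\bg)$ and $b=f_{\min}/(2c)$, which amounts to $y\le 1+y^{2\Ll_\bg}$ for all $y\ge 0$. This gives exactly the stated right-hand side. Thus the $4$ in $4d^2$ is $2\cdot 2d^2$: one factor $2$ comes from the splitting $y/2\le \tfrac12 y^{2\Ll_\bg}+\tfrac12$, not from the gradient.

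\textbf{The gradient bound.} You need $\sup_{[-1,1]^n}\|\nabla f\|_2\le 2d^2\normsup{f}$, dimension-free, and none of your routes provides it.
\begin{itemize}
\item Coordinatewise Markov costs a factor $\sqrt n$.
\item Passing through $\|x-y\|_1\le \sqrt n\,\|x-y\|_2$ also costs $\sqrt n$.
\item Markov along the chord through $x$ and $y$ gives $|f(x)-f(y)|\le 2d^2\normsup{f}\,\delta/\ell$, with $\ell$ the chord length. This is useless for short chords: for $x=(1-\epsilon,1)$ and $y=(1,1-\epsilon)$ in $[-1,1]^2$ one has $\ell=\delta$, so the bound does not shrink with $\delta$.
\item Using $\delta\le 2\sqrt n$ cannot remove an $n$-dependence from an $n$-free statement.
\end{itemize}
The missing ingredient is a Markov inequality for convex bodies. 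Wilhelmsen's bound $\|\nabla f\|_2\le \frac{4d^2}{w(K)}\normsup{f}$, applied to $[-1,1]^n$ (width $2$), already gives $2d^2$. The paper uses the sharper $d(2d-1)\le 2d^2$ in \Cref{lem:gx}.
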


Before proving Proposition~\ref{prop:bound}, we present some auxiliary results.
\begin{lemma}
\label{lem:gx} Let $f \in \R[\bx]$. Let $d = \deg f$. Let $g_1,\dots,g_m \in \R[\bx]$ be such that $S_\bg \subseteq [-1,1]^n$ and nonempty. Then, for all $x \in [-1,1]^n$,
\[
\max\{ 0, f_{\min} - f(x)\} \le d(2d-1)\dist(x,S_\bg)\normsup{f}.
\]
\end{lemma}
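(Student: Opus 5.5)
Fix $x \in [-1,1]^n$. If $f(x) \ge f_{\min}$ the left-hand side is $0$ and there is nothing to prove, so assume $f(x) < f_{\min}$. By compactness of $S_\bg$ (nonempty), choose $y \in S_\bg$ with $\|x-y\|_2 = \dist(x,S_\bg)$. Since $S_\bg \subseteq [-1,1]^n$ and the cube is convex, the whole segment $[y,x]$ lies in $[-1,1]^n$. Also $f(y) \ge f_{\min}$, hence
\[
\max\{0,f_{\min}-f(x)\} = f_{\min}-f(x) \le f(y)-f(x) \le |f(y)-f(x)|.
\]
So it suffices to bound the variation of $f$ along the segment by a Lipschitz-type estimate.

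To get the Lipschitz estimate, I would apply the mean value theorem on the segment:
\[
|f(y)-f(x)| \le \sup_{z\in[-1,1]^n}\|\nabla f(z)\|_2\,\|x-y\|_2 .
\]
Each directional derivative $\langle \nabla f(z), v\rangle$ with $\|v\|_2=1$ is the derivative of the univariate polynomial $t\mapsto f(z+tv)$. I would bound it using a Markov-type inequality for polynomials on the cube. The classical multivariate Markov inequality on a convex body $K$ of width $w$ gives $\|\nabla f\|_{K} \le \tfrac{4d^2}{w}\|f\|_K$. For $K=[-1,1]^n$ the width is $2$, which yields $\|\nabla f(z)\|_2 \le 2d^2\normsup{f}$.

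The stated constant $d(2d-1)$ suggests the authors instead use a sharper Markov-type bound on the cube: the coordinatewise Markov inequality combined with a bound on directional derivatives of degree $d$ polynomials on the cube. The main obstacle is obtaining exactly $d(2d-1)$ rather than $2d^2$. My first attempt would be to use the known sharp estimate (Wilhelmsen / Kellogg-type) for directional derivatives on the cube, $|D_v f(z)| \le d(2d-1)\normsup{f}$ for unit $v$ and $z\in[-1,1]^n$. If that bound does not come out cleanly, note that $2d^2$ exceeds $d(2d-1)$ only by the term $d$, so it suffices to cite that estimate directly.

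Combining the mean value bound with this directional derivative estimate gives
\[
\max\{0,f_{\min}-f(x)\} \le d(2d-1)\normsup{f}\,\dist(x,S_\bg),
\]
which is the claimed inequality.
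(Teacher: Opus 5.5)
Your argument is the paper's proof: reduce to $f(x)<f_{\min}$, take a nearest point $y\in S_\bg$, apply the mean value theorem on the segment $[x,y]$, and bound the gradient on $[-1,1]^n$ by $d(2d-1)\normsup{f}$, which the paper cites from Kro\'o's Theorem~3 (a general convex-body Markov bound $2d(2d-1)/w(K)$ evaluated at $w([-1,1]^n)=2$); your observation that the segment stays in the cube, which the paper leaves implicit, is correct and needed. Drop the fallback remark, though: Wilhelmsen's $2d^2\normsup{f}$ is strictly larger than $d(2d-1)\normsup{f}$ for $d\ge 1$ and does not give the stated constant, so the sharper estimate (which is not Wilhelmsen's) has to be cited, as you end up doing.
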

\begin{proof}
Let $x\in [-1,1]^n$ be given. If $f(x) \ge f_{\min}$ the statement is trivial. Thus, we assume $f(x) < f_{\min}$. Let $y \in S_\bg$ be such that $\dist(x,S_\bg) = \|x-y\|$.
From the mean value theorem, $f(x) - f(y) = (x-y)^T \nabla f(\alpha x + (1-\alpha)y)$ for some $\alpha \in [0,1]$.
From Cauchy-Schwartz $|f(x) - f(y)| \le \|x-y\|_2 \|\nabla f(\alpha x + (1-\alpha)y)\|_2$.
Using \cite[Thm.~3]{kroo} we have $\|\nabla f(\alpha x + (1-\alpha)y)\|_2 \le \normsup{\|\nabla f\|_2} \le d(2d-1)\normsup{f}$.
Therefore, $0 < f_{\min} - f(x) \le f(y) - f(x)  \le d(2d-1)\dist(x,S_\bg)\normsup{f}$. 
\end{proof}

\begin{corollary}
\label{cor:Lgbound} Let $\bg \in \R[\bx]^m$ be such that $S_\bg \subseteq [-1,1]^n$ and nonempty. Then
the {\L}ojasiewicz exponent $\Ll_\bg$ in~\eqref{eq:LojDist} satisfies $\Ll_\bg \ge 1$.

\begin{proof}
Take $j \in \{1,\dots,m\}$ and note that $\min_{x \in S_\bg} g_j(x) \ge 0$.  Now take $x \in [-1,1]^n$.
By \Cref{lem:gx} applied to $g_j$ with degree $\deg g_j = d_j$, we obtain
\[
d_j(2d_j-1)\normsup{g_j}\dist(x,S_\bg)  \ge \max \left \{ 0, \min_{x \in S_\bg} g_j(x) - g_j(x) \right \} \ge \max\{ 0,  -  g_j(x)\} = g_j(x)\vio.
\]
Thus, $G(x) \le 2d_\bg^2\max_{j=1, \dots,m}\normsup{g_j}\dist(x,S_\bg)$. From~\eqref{eq:LojDist},
it then follows that~$\Ll_\bg \ge 1$. 
\end{proof}
\end{corollary}

\begin{lemma}\label{lem:bound x ala alfa}
Let $0<\alpha < 1$ and $b>0$. Then,
\[
x^\alpha \le {b}^{-\frac{1-\alpha}{\alpha}}x+b \qquad \text{for all } x > 0
\]
\end{lemma}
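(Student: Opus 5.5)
The inequality $x^\alpha \le b^{-\frac{1-\alpha}{\alpha}}x + b$ for all $x>0$ is elementary, and I would prove it by splitting into two cases according to whether $x^\alpha$ is dominated by the constant $b$ or by the linear term. The threshold is the point where $x^\alpha = b$, namely $x_0 := b^{1/\alpha}$.

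In the first case, $x \le b^{1/\alpha}$. Since $t\mapsto t^\alpha$ is increasing on $(0,\infty)$ for $\alpha>0$, we get $x^\alpha \le b$. The term $b^{-\frac{1-\alpha}{\alpha}}x$ is nonnegative, so $x^\alpha \le b \le b^{-\frac{1-\alpha}{\alpha}}x + b$.

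In the second case, $x > b^{1/\alpha}$. Write $x^\alpha = x\cdot x^{\alpha-1}$. Because $\alpha-1<0$, the map $t\mapsto t^{\alpha-1}$ is decreasing, so $x^{\alpha-1} \le (b^{1/\alpha})^{\alpha-1} = b^{-\frac{1-\alpha}{\alpha}}$. Hence $x^\alpha \le b^{-\frac{1-\alpha}{\alpha}}x \le b^{-\frac{1-\alpha}{\alpha}}x + b$, using $b>0$.

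As an alternative, one could argue via concavity. The tangent line of $t^\alpha$ at $x_0$ has slope $\alpha b^{-\frac{1-\alpha}{\alpha}}$, which is at most $b^{-\frac{1-\alpha}{\alpha}}$, and intercept $(1-\alpha)b \le b$. Young's inequality would give the same conclusion. The case split is cleaner, though, and needs nothing beyond monotonicity of powers.

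The only real choice in the proof is the threshold $x_0=b^{1/\alpha}$, which makes the exponent bookkeeping $(b^{1/\alpha})^{\alpha-1} = b^{-(1-\alpha)/\alpha}$ come out exactly. After that there is no genuine obstacle.
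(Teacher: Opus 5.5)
Your proof is correct, but your main argument is not the paper's route; the concavity/tangent-line alternative you mention is. The paper applies weighted AM--GM, $y^{1-\alpha}x^\alpha \le \alpha x + (1-\alpha)y$ with $y=b^{1/\alpha}$, and divides by $b^{\frac{1-\alpha}{\alpha}}$. This gives $x^\alpha \le \alpha b^{-\frac{1-\alpha}{\alpha}}x + (1-\alpha)b$, which is exactly your tangent line at $x_0=b^{1/\alpha}$. It then drops the weights $\alpha$ and $1-\alpha$.

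The two routes compare cleanly as follows. Set $u := b^{-\frac{1-\alpha}{\alpha}}x$ and $v := b$. Then $x^\alpha = u^\alpha v^{1-\alpha}$, and your threshold $x \le b^{1/\alpha}$ is precisely the condition $u \le v$. Your case split therefore shows that this weighted geometric mean is at most $\max\{u,v\}$, using only monotonicity of powers. The paper shows it is at most $\alpha u + (1-\alpha)v$. That bound is slightly sharper, since a convex combination never exceeds the maximum. Both arguments then finish with the bound $u+v$.

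Your version is fully self-contained. The paper's is a one-liner given AM--GM and keeps better constants. Neither refinement matters in Proposition~\ref{prop:bound}, where the lemma is used only in its stated form.
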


\begin{proof}
Let $x>0$. By AM-GM inequality, we have
\(
 b^{\frac {1-\alpha}{\alpha}}x^{\alpha} \le \alpha x + (1-\alpha)b^{\frac 1{\alpha}} ,
\)
which implies  $x^\alpha \le \alpha{b}^{-\frac{1-\alpha}{\alpha}}x+ (1-\alpha)b \le {b}^{-\frac{1-\alpha}{\alpha}}x+b$. 
\end{proof}

We now present the proof of \Cref{prop:bound}.

\begin{proof}[Proof of \Cref{prop:bound}]
Take $x  \in [-1,1]^n$.
We have
\begin{align}
\notag \max\{0, f_{\min} - f(x)\}
& \le  d(2d-1) \normsup{f}\dist(x,S_\bg)
& \text{(by~\Cref{lem:gx})}\\
\notag & \le 2d^2 \normsup{f}(\cl_\bg G(x))^{1/\Ll_\bg} & (\text{by~\eqref{eq:LojDist}})\\
& \le
2d^2\normsup{f}\cl_\bg^{1/\Ll_\bg}H(x)^{\tfrac{1}{2\Ll_\bg}} & \text{(by~\eqref{eq:norms})}. \label{eq:pmin-p}
\end{align}
Let $c =2d^2\normsup{f}\cl_\bg^{1/\Ll_\bg}$. By \Cref{cor:Lgbound}, $\Ll_\bg \ge 1$. Then from
\Cref{lem:bound x ala alfa} we have
\begin{equation}\label{eq:Hala}
H(x)^{\tfrac{1}{2\Ll_\bg}} \le  \left(\frac{2c}{f_{\min}}\right)^{2\Ll_\bg-1} H(x) + \frac{f_{\min}}{2c}.
\end{equation}
Combining~\eqref{eq:pmin-p} and~\eqref{eq:Hala} we obtain,
\begin{align*}
\max\{0,f_{\min}-f(x)\}
\le    c H(x)^{\tfrac{1}{2\Ll_\bg}}
&\le \tfrac{1}{2}\left(\frac{2c}{f_{\min}}\right)^{2\Ll_\bg} f_{\min}\; H(x) + \tfrac 12  f_{\min}\\
&= \tfrac{1}{2}\cl_\bg^2(4d^2\kappa_{S_{\bg}}(f))^{2\Ll_\bg} f_{\min}H(x) + \tfrac 12f_{\min}.
\end{align*}
\end{proof}

\subsection{Lift-and-project for polynomial optimization}
\label{sec:liftintro}
Lift-and-project methods are central in optimization. These methods lift a complex set $S \subseteq \mathbb{R}^n$ to a higher-dimensional, typically convex or polyhedral, set $T \subseteq \mathbb{R}^{n+k}$. After lifting, further operations—such as adding constraints—are performed on $T$. Projecting $T$ back to $\mathbb{R}^n$ yields a tighter or more structured relaxation of $S$. This viewpoint underlies extended formulations in polyhedral combinatorics~\cite{Schrijver1986, GLS1988}, classical lift-and-project procedures in integer optimization~\cite{balas1993lift}, and semidefinite relaxations of nonconvex quadratic problems~\cite{BenTalNemirovski2001}. It offers a unified geometric framework for strengthening relaxations and revealing hidden convexity.

Recently, in~\citep{kuryatnikova2024reducing,roebers2021sparse}, a lift-and-project methodology for polynomial optimization is used to obtain new types of non-SOS Positivstellensatz over both compact and noncompact sets. We next illustrate this methodology, which is the core idea used to obtain our results.

Given $f \in \R[\bx]$ and $\bg \in \R[\bx]^m$, a {\em lifting} of $(f,S_\bg)$ is a pair $(F,T)$ with $\{(x,\bg(x)):x\in S_\bg\} \subseteq T \subseteq \R^n \times \R^m_+$ and $F \in \R[\bx,\bu]$ such that the substitution $\bu = \bg(\bx)$ projects $T$ onto $S_{\bg}$, and projects $F(\bx,\bu)$ onto $f(\bx)$. Specifically, $S_{\bg} =\{x \in \R^n: (x,\bg(x)) \in T\}$ and $f(\bx) = F(\bx,\bg(\bx))$. If $F \in \R[\bx,\bu]$ is positive (resp. nonnegative) on $T$, we call $(F,T)$ a  {\em positive} (resp. {\em nonnegative}) lifting. By definition, a lifting $(F,T)$ of $(f,S_\bg)$ implies that for all $x \in S_\bg$, the point $(x,\bg(x)) \in T$ and thus $p(x) = F(x,\bg(x)) >0$ (resp. $p(x) = F(x,\bg(x)) \ge 0$). That is, if a positive (resp. nonnegative) lifting $(F,T)$ of $(f,S_\bg)$ exists, then $f$ is positive (resp. nonnegative) on $S_\bg$. This is illustrated in \Cref{ex:lift2} next.

\begin{example}\label{ex:lift2}
Define the  polynomials
$g_1(\bx)=\bx_1$, $g_2(\bx)=\bx_2$ and $g_3(\bx) = 1- (1+\bx_1)(1+\bx_2)/2$, and let $\bg = [g_1, g_2,g_3]$.
Then,
\(
S_\bg \subseteq [-1,1]^2.
\)
To certify the nonnegativity of $f(\bx)=\frac 34-(\frac{1}{2}-\bx_1)^2-(\frac{1}{2}-\bx_2)^2$ on $S_\bg$, introduce the polynomial
\begin{equation}
\label{eq:Fex}
F(\bx,\bu) =\tfrac 14 + 2\bigl(\bu_1\bu_2 + \bu_1\bu_3+\bu_2\bu_3\bigr)  + \bx_2^2 \bu_1 + \bx_1^2 \bu_2
\end{equation}
and define the set
\(
T= [-1,1]^2 \times [0,1]^3 \subset \R^2\times \R^3_+.
\)
Clearly, $F(x,u)\ge 1/4 > 0$ for any $(x,u) \in T$. Applying the simple polynomial substitution $\bu_j = g_j(\bx)$, $j=1,2,3$,  an expanding the expression we obtain
\begin{equation}\label{eq:exNA}
F(\bx,\bg)
= \tfrac 14 + 2(g_1(\bx)g_2(\bx) + g_1(\bx)g_3(\bx)+g_2(\bx)g_3(\bx)) + \bx_1^2 g_2(\bx) + \bx_2^2 g_1(\bx)
= f(\bx).
\end{equation}
Therefore, $(F,T)$ is a positivity lifting of $(f,S_{\bg})$, which shows $f>0$ on $S_{\bg}$.
\end{example}

As shown in~\cite[][Thm. 3.2]{kuryatnikova2024reducing} every polynomial $f$  positive on $S_\bg$ has a positive lifting. More precisely, Given $T$ such that $\{(x,g(x)):x\in S_\bg\} \subseteq T \subset \R^n \times \R^m_+$, there is $F$ of degree at most $\max(\deg f,2d_\bg)$, such that $(T,F)$ is a positive lifting of $(f,S_\bg)$.

The key property of these liftings is that not only the positivity of $F$ on $T$ implies the positivity of $f$ on $S_{\bg}$. Also, the substitution $\bu = \bg(\bx)$ projects any nonnegativity certificate for $F$ on $T$ into a nonnegativity certificate for $p$ on $S_{\bg}$.
\begin{example}[\Cref{ex:lift2} revisited] Consider $f$, $\bg$, $F$, and $T$ defined in \Cref{ex:lift2}. Now, we illustrate how to use certificates for $F>0$ on $T = [-1,1]^2\times[0,1]^3$ to produce certificates for $f>0$ on $S_{\bg}$.
First, by~\eqref{eq:Fex}, we have $F \in \cT(1 \pm \bx,\bu)_3$. The substitution $\bu = \bg(\bx)$ shows that $f = F(\bx,\bg(\bx)) \in \cT(1 \pm \bx,\bg)_6$ as $d_\bg \le 2$. Notice that in~\eqref{eq:exNA} we actually obtain $f \in \cT(\bg)_3$. Moreover, as $F$ positive on $[-1,1]^2 \times [0,1]^3$ we know from Putinar's {\germanF} (see \Cref{thm:certificates}\ref{it:cert2}) that $F \in \cQ(1 \pm \bx,\bu,1-\bu)_r$ for some $r \ge 0$, which implies $f(\bx) = F(\bx,\bg) \in \cQ(1\pm \bx,\bg,1-\bg)_{2r}$.
For instance
\[
\begin{aligned}
F(\bx,\bu) &=(\bu_1+\bu_2+\bu_3-\tfrac12)^2
+\bigl((1-\bu_1)^2+\bx_2^2\bigr)\bu_1
+\bu_1^2(1-\bu_1)\\
&\quad+\bigl((1-\bu_2)^2+\bx_1^2\bigr)\bu_2
+\bu_2^2(1-\bu_2)+(1-\bu_3)^2\bu_3
+\bu_3^2(1-\bu_3),
\end{aligned}
\]
shows we can take $r = 3$.
\end{example}

Next, we show how the lift-and-project approach illustrated above can be used to obtain
effective {\germansF} over $S_{\bg}$.

\section{Effective {\germansF}}
\label{sec:main}

Our strategy to obtain nonnegativity certificates unfolds in three steps: scaling, lifting, and substitution.
Given a polynomial \(f>0\) on \(S_{\bg}\), we first scale the variables so that \(1\pm \bx \in \mathcal{R}_{r_0}(\mathbf g)\)
and simultaneously scale the defining
constraints so that \(1-\bg \in \mathcal{R}_{r_0}(\bg)\).
In particular, this normalization ensures that \(S_\bg \subseteq [-1,1]^n\) and
\(\|g_j\|_{\sup}\le 1\), $j=1,\dots,m$.
 We then construct a positive lifting \((F,T)\) of
\((f,S_\bg)\) with lifting set \(T=[-1,1]^n\times[0,1]^m\). Applying an effective
Positivstellensatz on the hypercube yields a bounded-degree certificate of nonnegativity
for \(F(\bx,\bu)\) in terms of the generators \(1\pm \bx\),
\(\bu\), and \(1-\bu\), with degree bounded by some \(R\).
Substituting \(\bu\) by \(\bg(\bx)\) and using the identity
\(f(\bx)=F(\bx,\bg(\bx))\) then provides a corresponding representation
of \(f\) in terms of \(1\pm \bx\), \(\bg\), and \(1-\bg\).
Finally, thanks to the initial scaling, the quantities \(1\pm \bx\) and
\(1-\bg\) can themselves be expressed in terms of \(\bg\), yielding
the desired effective representation of \(f\).

After all the substitutions, the final degree \(r\) of the nonnegativity certificate for \(f\) takes the form \(r = aR + b\), where \(a\) and \(b\) are constants depending only on \(\bg\) and \(R\) depends on \(n+m\), \(\deg(F)\), and the \emph{condition measure} $\frac{\|F\|}{F_{\min}}$ of the lifting \(F\). In Theorem~\ref{thm:enhLift}, we refine~\cite[][Thm.~3.2]{kuryatnikova2024reducing} by additionally deriving a bound on the condition number of $F$, in terms of the condition number of $f$.

\begin{theorem}
\label{thm:enhLift}
Let $f \in \R[\bx]$ with $\deg f = d$, and $g_1,\dots,g_m \in \R[\bx]$ with $\normsup{g_j} \le 1$, $j=1,\dots,m$, such that $S_\bg \subseteq [-1,1]^n$ and nonempty . Choose a compact set $T \subseteq [-1,1]^n \times [0,1]^m$ such that
\(
\{(x,\bg(x)) : x \in S_\bg\} \subseteq T
\). Then  \(
f > 0
\) on $S_\bg$ if and only if there is $F \in \R[\bx, \bu]$ with degree $\deg F = \max\{d, 2d_\bg\}$ such that
\begin{enumerate}[label=(\roman*)]
\item $f(\bx) = F(\bx,\bg(\bx))$, \label{it:T2}
\item $\min_{(x,u)\in T}F(x,u) \ge \tfrac 12 f_{\min}$,\label{it:T1}
\item $\max_{(x,u)\in T} F(x,u) \le (1+ 2m\cl_\bg^2)(4d^2 \kappa_{S_\bg}(f))^{2\Ll_\bg}f_{\min}$. \label{it:T3}
\end{enumerate}
\end{theorem}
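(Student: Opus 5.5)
The plan is to take $F$ to be $f$ plus a penalty that vanishes on the graph $\bu=\bg(\bx)$ and dominates the squared violation $H$ from \eqref{eq:Hdef}, and then to read off \ref{it:T1} from \Cref{prop:bound}. Concretely, I would set
\[
F(\bx,\bu) := f(\bx) + C\sum_{j=1}^m \bigl(\bu_j - g_j(\bx)\bigr)^2, \qquad C := \tfrac12 \cl_\bg^2 (4d^2\kappa_{S_\bg}(f))^{2\Ll_\bg} f_{\min}.
\]
This choice gives $F(\bx,\bg(\bx)) = f(\bx)$, so \ref{it:T2} holds. It also gives $\deg F = \max\{d,2d_\bg\}$: the penalty has degree $2d_\bg$ because its top-degree part is $C\sum_j g_j^2$, whose leading forms are sums of squares and cannot cancel.

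The key step is the lower bound \ref{it:T1}. I first show that $\sum_j (u_j-g_j(x))^2 \ge H(x)$ for every $(x,u)\in T$. Since $u_j\ge 0$ on $T$, whenever $g_j(x)<0$ we have $g_j(x)-u_j \le g_j(x) <0$, hence $(u_j-g_j(x))^2 \ge (g_j(x)\vio)^2$. When $g_j(x)\ge 0$ the violation term is zero, so the inequality holds trivially. Then \Cref{prop:bound} gives
\[
f(x) \ge f_{\min} - \max\{0,f_{\min}-f(x)\} \ge f_{\min} - C H(x) - \tfrac12 f_{\min},
\]
so $F(x,u) \ge f(x) + CH(x) \ge \tfrac12 f_{\min}$ on $T$. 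The case $d=0$ is trivial because $f$ is then a positive constant. For the upper bound \ref{it:T3}, I use $|u_j|\le 1$ and $\normsup{g_j}\le 1$ to get $(u_j-g_j(x))^2\le 4$, so the penalty is at most $4mC = 2m\cl_\bg^2(4d^2\kappa_{S_\bg}(f))^{2\Ll_\bg}f_{\min}$. I also have $f(x)\le \normsup{f} = \kappa_{S_\bg}(f) f_{\min} \le (4d^2\kappa_{S_\bg}(f))^{2\Ll_\bg}f_{\min}$, which uses $\kappa\ge1$ from \eqref{eq:kapparange}, $d\ge1$, and $\Ll_\bg\ge 1$ from \Cref{cor:Lgbound}. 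Summing the two bounds gives the constant $(1+2m\cl_\bg^2)$.

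For the converse, suppose such an $F$ exists. For $x\in S_\bg$ the point $(x,\bg(x))$ lies in $T$, so $f(x)=F(x,\bg(x))$ is bounded below by $\min_T F$, and \ref{it:T1} makes this positive. There is a mild circularity, since $f_{\min}$ is a quantity attached to $f$. I would phrase the converse as: \ref{it:T1} and \ref{it:T2} with a positive right-hand side in \ref{it:T1} force $f>0$ on $S_\bg$, exactly as for liftings in Section~\ref{sec:liftintro}.

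The main obstacle is ensuring the penalty is nonnegative, vanishes exactly on the substitution $\bu=\bg(\bx)$, and still controls the violation, all without the penalty becoming large on $T$. The sign constraint $u\ge0$ built into $T$ is what makes the simple squared penalty dominate $H$, and boundedness of $T$ together with $\normsup{g_j}\le1$ keeps it at most $4mC$.
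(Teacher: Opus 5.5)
Your proposal is correct and is essentially the paper's proof. It uses the same lifting $F=f+\hat\cl_{\bg,f}\|\bu-\bg(\bx)\|_2^2$ with the same constant, obtains \ref{it:T1} by bounding the penalty below by $H$ and applying \Cref{prop:bound}, and obtains \ref{it:T3} from $\normsup{f}+4m\hat\cl_{\bg,f}$ together with $\kappa_{S_\bg}(f)\ge 1$ and $\Ll_\bg\ge 1$. The only cosmetic difference is that you show $\|u-\bg(x)\|_2^2\ge H(x)$ directly from $u\ge 0$, whereas the paper goes through the identity \eqref{eq:Hopt}, $H(x)=\min_{u\in[0,1]^m}\|u-\bg(x)\|_2^2$, which additionally uses $\normsup{g_j}\le 1$.
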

\begin{proof}
In one direction, if such $F$ exists, then $f(x) = F(x,\bg(x))> 0$ for all $x \in S_\bg$ since $\{(x,\bg(x)) : x \in S_\bg\}  \subseteq T$ and $F >0$ on $T$. Conversely, assume $f > 0$ on $S_\bg$, then $f_{\min} := \min_{x \in S_\bg} f(x) > 0$. Denote $\|\bu-\bg(\bx)\|_2^2 :=  \sum_{j=1}^m (u_j - g_j(\bx))^2$ and set
\[
F(\bx,\bu) := f(\bx) + \hat\cl_{\bg,f}\|\bu-\bg(\bx)\|_2^2 \text{ with }\hat\cl_{\bg,f} := \tfrac{1}{2}\cl_\bg^2(4d^2\kappa_{S_\bg}(f))^{2\Ll_\bg}f_{\min},
\]
Note that $\deg F  = \max\{d, 2d_\bg\}$. Further,  $F(\bx,\bg(\bx)) = f(\bx)$, so~\ref{it:T2} holds.
To prove~\ref{it:T1}, we first notice that since $\normsup{g_j} \le 1$, $j=1,\dots,m$, it follows that for any $x \in [-1,1]^n$,
\begin{equation}
\label{eq:Hopt}
H(x)  = \|\bg(x)\vio\|^2_2 = \min_{u \in [0,1]^m}\|u-\bg(x)\|_2^2.
\end{equation}
Thus, for any $(x,u) \in T$ we have
\begin{align*}
F(x,u)
& \ge f(x) + \hat\cl_{g,f} \min_{u \in [0,1]^m}\|u-\bg(x)\|_2^2 \\
& = f(x) + \hat\cl_{\bg,f} H(x)
& \text{(by~\eqref{eq:Hopt})}\\
& \ge f(x) + \max\{0, f_{\min}-f(x)\} - \tfrac 12 f_{\min} &\text{(by~\Cref{prop:bound})}\\
& \ge \tfrac 12 f_{\min}.
\end{align*}
Furthermore, we have,
\begin{align*}
\max_{(x,u)\in T} F(x,u)
& \le \max_{x \in [-1,1]} f(x)  + \hat\cl_{\bg,f} \max_{(x,u)\in T} \|u-\bg(x)\|_2^2\\
& \le \normsup{f} + \hat\cl_{\bg,f}\sum_{j=1}^m(1+\normsup{g_j})^2\\
& \le \normsup{f} + 4m\hat\cl_{\bg,f}\\
& \le (1+ 2m\cl_\bg^2)(4d^2\kappa_{S_\bg}(f))^{2\Ll_\bg}f_{\min}, &  \text{(from~\eqref{eq:kapparange} and \Cref{cor:Lgbound})}
\end{align*}
proving~\ref{it:T3}. 
\end{proof}

In the sections that follow, we leverage Theorem~\ref{thm:enhLift} to
improve the current best Putinar's Positivstellensatz degree bound
and to obtain new effective versions of Schm\"{u}dgen's, Krivine-Stengle's, and extended-Handelman's {\germansF}.

In general, the degree bounds for a polynomial $f > 0$ on $S_{\bg}$ will depend both on the characteristics of $f$ and $\bg$. As in related literature, the focus in the results that follow is to characterize the dependance of the degree bounds on the characteristics of $f$, such as the number of variables $n$, the $\deg f = d$, and the condition number of $f$, $\kappa_{S_{\bg}}(f)$~\eqref{eq:kappadef}. For that purpose we will use the $\bigO$-notation
$\bigOg(\cdot)$
to indicate that an expression is bounded up to multiplicative constants that depend only on $\bg$.

\subsection{Effective Putinar Positivstellensatz}
\label{sec:Putinar}

To improve on the existing degree bound on Putinar's Positivstellensatz on a general compact semialgebraic set, we leverage  the following degree bound on Putinar's Positivstellensatz on the hypercube.

\begin{theorem}[{\citep[][Thm. 3]{baldi2024degree}}]
\label{thm:PutBox}
 Let $f \in \R[\bx]$ with $\deg f = d$. If $f > 0$ on $[-1,1]^n$, then there exists an absolute constant $c > 0$ such that:
\[
f \in \cQ(1 \pm \bx)_{rn} \text{ whenever }
r \ge c \log (n) d^2 \kappa_{[-1,1]^n}(f) + \bigO(\kappa_{[-1,1]^n}(f)^{1/2}).
\]
\end{theorem}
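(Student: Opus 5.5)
The plan is to go through Schm\"udgen's bound on the hypercube, which is quadratically better in $\kappa$, and to lose only a linear factor in $\kappa$ when converting preorder certificates into quadratic-module certificates. The route is a standard \emph{perturb–enlarge–convert} argument, and I have not checked it line by line. Write $\kappa=\kappa_{[-1,1]^n}(f)$ and $\varepsilon=f_{\min}$ on $[-1,1]^n$.

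\textbf{Step 1 (stability under enlargement).} Let $\delta=c_0/(d^2\kappa)$ for a small absolute constant $c_0$. I will use Markov/Bernstein-type inequalities on $[-1,1]^n$, as in the Kro\'o estimate used in Lemma~\ref{lem:gx}, together with Remez/Chebyshev growth bounds for a degree-$d$ polynomial leaving the cube. The aim is to show that $f \ge \varepsilon/2$ on the enlarged box $B_\delta=[-(1+\delta),1+\delta]^n$ and that $\sup_{B_\delta}|f| \le 2\normsup{f}$. The one-variable Chebyshev growth factor is $T_d(1+\delta)\le e^{d\sqrt{2\delta}}$, which is harmless when $\delta\lesssim 1/d^2$. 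The Markov bound, applied coordinatewise along a path of length at most $\sqrt n\,\delta$ in $\ell_2$ but only $\delta$ per coordinate, should give the drop bound $\le n d^2\delta\normsup{f}$ or better. The dependence on $n$ is kept under control by working coordinatewise, as Baldi--Slot do.

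\textbf{Step 2 (Schm\"udgen on the enlarged box).} Rescale by $Z(\bx)=(1+\delta)\bx$ and use Lemma~\ref{lem:trans}. Then $f\circ Z>0$ on $[-1,1]^n$ with condition number at most $4\kappa$. The effective Schm\"udgen bound of Laurent--Slot for the hypercube, proved by the polynomial (Jackson/Christoffel--Darboux) kernel method, gives $f\in\cT\big((1+\delta)^2-\bx_i^2 : i\le n\big)_{r_1}$ with $r_1=\bigO(d\sqrt{\kappa})$ up to $n$-dependent factors.

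\textbf{Step 3 (converting the preorder into the quadratic module).} I need to show that every product $\prod_{i\in I}\big((1+\delta)^2-\bx_i^2\big)$, multiplied by an SOS of degree $\le r_1$, lies in $\cQ(1\pm\bx)$ with degree $\bigO(n/\delta)$ up to logarithmic factors. The key tool is the univariate fact that a polynomial positive on $[-1,1]$ with margin $\delta$ relative to its size belongs to $\cQ(1-x^2)_{\bigO(1/\delta)}$; this follows from the $\sqrt{\kappa}$ univariate bounds or from Fej\'er--Riesz and Luk\'acs. Since $(1+\delta)^2-x_i^2\ge 2\delta$ on $[-1,1]$, each factor is strictly positive there. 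The product over $i\in I$ should then be handled by splitting it as $\prod_i \big(a_i(\bx_i)+b_i(\bx_i)(1-\bx_i^2)\big)$ and expanding. The cross terms involving several $(1-\bx_i^2)$ factors are the obstacle.

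I would first try to avoid the cross terms by rewriting the preorder certificate through a tensor-product kernel operator, since the Laurent--Slot kernel is a product of univariate kernels. The approximating operator $K_r$ applied to $f$ should then be expressible as a sum $\sum_i \sigma_i(1-\bx_i^2)$ plus an SOS, at the cost of $r\sim d^2/\delta'$ with $\delta'$ the relative margin. If that works it gives $r\approx c\,d^2\kappa\log n$, where the $\log n$ comes from balancing the per-coordinate errors, plus the lower-order $\bigO(\sqrt{\kappa})$ term inherited from Step~2. The total degree in $n$ variables is then $rn$. The hard step is this last conversion with only linear loss in $\kappa$ and only $\log n$ loss in dimension.
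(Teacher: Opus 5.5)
The paper gives no proof of this statement. It is quoted from Baldi--Slot, so your sketch has to stand on its own.

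\textbf{The gap is in Step~3.} Your architecture is sound: enlarge the box, apply the Laurent--Slot Schm\"udgen bound there, then pay a price linear in $\kappa$ to move from the preordering of the enlarged box into $\cQ(1\pm\bx)$. But Step~3 is not proved, and it is the step that carries the entire linear-in-$\kappa$ term together with its $\log n$. Neither mechanism you suggest can deliver it.
\begin{itemize}
\item Expanding $\prod_i\bigl(a_i(\bx_i)+b_i(\bx_i)(1-\bx_i^2)\bigr)$ regenerates exactly the products $\prod_{i\in J}(1-\bx_i^2)$ you need to eliminate.
\item The tensor-product kernel is the \emph{source} of those products. For fixed $y$, $\prod_i K(\bx_i,y_i)$ is a product of univariate elements of $\cQ(1-\bx_i^2)$. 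So asserting $K_rf\in\Sigma[\bx]+\sum_i\Sigma[\bx](1-\bx_i^2)$ restates the difficulty rather than resolving it.
\end{itemize}

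\textbf{Step~1 also needs a fix.} A coordinatewise Markov path gives a drop of order $nd^2\delta\normsup{f}$. For $\delta=c_0/(d^2\kappa)$ with $c_0$ absolute, this exceeds $f_{\min}/2$. Restrict instead to the line $t\mapsto f(tx)$ with $x\in[-1,1]^n$. This is a univariate polynomial bounded by $\normsup{f}$ on $[-1,1]$. Markov's inequality and $T_d(1+\delta)\le 2$ for $\delta\le 1/(2d^2)$ then give $|f((1+\delta)x)-f(x)|\le 2d^2\delta\normsup{f}$, independently of $n$.

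\textbf{A way to close Step~3.} After the substitution $\bx\mapsto(1+\delta)\bx$, it suffices to show
$\prod_{i\in I}(1-\bx_i^2)\in\cQ(\mu^2-\bx_1^2,\dots,\mu^2-\bx_n^2)_{2kn}$,
where $\mu=(1+\delta)^{-1}$ and $k=\lceil \log n/(2\log(1+\delta))\rceil$, so that $n\mu^{2k}\le 1$. Multiplying by the SOS weights of the Schm\"udgen certificate then adds only $2kn=\bigO(n\log(n)\,d^2\kappa)$ to the degree. Three identities do it.
\begin{itemize}
\item[(a)] $1-x^2=\tfrac1k(1-x^{2k})+\tau(x)$, where $\tau=\tfrac{1-x^2}{k}\sum_{j=1}^{k-1}(1-x^{2j})$. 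This $\tau$ is SOS because $(1-x^2)(1-x^{2j})=\sum_{l<j}\bigl((1-x^2)x^l\bigr)^2$.
\item[(b)] Set $z_j=\bx_j^{2k}$ and take the single polynomial $s=1-\sum_{j=1}^n z_j$. Then $1-z_j=s+\sum_{l\neq j}z_l$. Hence $\prod_{j\in J}(1-z_j)$ is a nonnegative combination of terms $z^\beta s^e$, each a square times $1$ or $s$, so it lies in $\cQ(s)_{2k|J|}$. This is where the cross terms disappear: everything is routed through one generator, for which preordering and quadratic module coincide.
\item[(c)] $s=(1-n\mu^{2k})+\sum_i(\mu^{2k}-\bx_i^{2k})$, and $\mu^{2k}-x^{2k}=(\mu^2-x^2)\sum_{j<k}\mu^{2(k-1-j)}x^{2j}$. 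So $s\in\cQ(\mu^2-\bx^2)_{2k}$. The $\log n$ is exactly the price of $\|x\|_{2k}\le n^{1/(2k)}\|x\|_\infty$.
\end{itemize}
Expand $\prod_{i\in I}\bigl(\tfrac1k(1-\bx_i^{2k})+\tau(\bx_i)\bigr)$ and apply (b) and (c) termwise to get the claim. Finally, $1-\bx_i^2=\tfrac12[(1-\bx_i)(1+\bx_i)^2+(1+\bx_i)(1-\bx_i)^2]$ passes from $\cQ(1-\bx^2)$ to $\cQ(1\pm\bx)$ at the cost of one degree.
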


With Theorem~\ref{thm:PutBox} at hand, we can now use the lift-and-project procedure (Theorem~\ref{thm:enhLift}) to obtain degree bounds for Putinar's {\germanF} for a polynomial $f >0$ on $S_{\bg}$. The degree bound we obtain in this form (\Cref{thm:ourputi}) improves the dependance of the degree bound on the condition measure $\kappa_{S_{\bg}}(f)$ in the current best bound~\citep[][Cor. 3.3]{baldi2025lojasiewicz} from $\bigO(\kappa_{S_{\bg}}(f)^{7\Ll_\bg +3})$  to $\bigO(\kappa_{S_{\bg}}(f)^{2\Ll_\bg})$.

\begin{theorem}[Effective Putinar {\germanF}]
\label{thm:ourputi}
Let $f \in \R[\bx]$ with $\deg f = d$, and $g_1,\dots,g_m \in \R[\bx]$ such that $1 - \|\bx\|^2_2 \in  \cQ(\bg)$. If $f > 0$ on $S_\bg$, then there is a constant $C_\bg > 0$ depending only on $\bg$ such that
\[
f \in \cQ(\bg)_{rn} \text{
whenever }
r \ge  C_\bg \log (n) d^{4\Ll_\bg+2}\kappa_{S_\bg}(f)^{2\Ll_\bg} + \bigOg( d^{2\Ll_\bg}\kappa_{S_\bg}(f)^{\Ll_\bg})).
\]
\end{theorem}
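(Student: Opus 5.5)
The plan follows the scaling–lifting–substitution scheme of Section~\ref{sec:main}.

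\emph{Scaling.} Since $1-\|\bx\|_2^2 \in \cQ(\bg)_{r_0}$ for some $r_0$ depending only on $\bg$, we have $1\pm \bx_i = \tfrac12\big((1\pm\bx_i)^2 + (1-\bx_i^2)\big)$ and $1-\bx_i^2 = (1-\|\bx\|_2^2)+\sum_{k\ne i}\bx_k^2$. Hence $1\pm\bx \subset \cQ(\bg)_{t_1}$ with $t_1=\max\{r_0,2\}$, and $S_\bg\subseteq[-1,1]^n$.

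Next replace each $g_j$ by $\tilde g_j = g_j/M_j$ with $M_j\ge \max\{1,\normsup{g_j}\}$. This leaves $S_\bg$, the quadratic module and $\Ll_\bg$ unchanged (\Cref{lem:scaled-loj}), and $\cl_{\tilde\bg}$ changes only by a factor depending on $\bg$. We also need $1-\tilde g_j \in \cQ(\tilde\bg)_{t_2}$. By \Cref{lem:norm1f-f}\ref{normfit:three}, $\normone{g_j}-g_j\in\cQ(1\pm\bx)_{2d_\bg+1}$, so choosing $M_j=\max\{1,\normone{g_j}\}$ gives $1-\tilde g_j\in\cQ(1\pm\bx)_{2d_\bg+1}$. \Cref{lem:SubsQTR}\ref{it:lem1} then places it in $\cQ(\bg)_{t_2}$ with $t_2=2d_\bg+1+t_1$. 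All of $t_1,t_2$ and the scaling constants depend only on $\bg$.

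\emph{Lifting.} Apply \Cref{thm:enhLift} with $T=[-1,1]^n\times[0,1]^m$. This gives $F$ of degree $D=\max\{d,2d_\bg\}$ with $F(\bx,\tilde\bg)=f$, $F_{\min,T}\ge f_{\min}/2$, and
\[
\max_T F\le (1+2m\cl_\bg^2)(4d^2\kappa_{S_\bg}(f))^{2\Ll_\bg}f_{\min}.
\]
The hypercube result needs a bound on $\sup_T|F|$, not only $\max_T F$; since $F>0$ on $T$ the two coincide. Therefore
\[
\kappa_T(F)\le 2(1+2m\cl_\bg^2)(4d^2)^{2\Ll_\bg}\kappa_{S_\bg}(f)^{2\Ll_\bg}=\bigOg\big(d^{4\Ll_\bg}\kappa_{S_\bg}(f)^{2\Ll_\bg}\big).
\]

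\emph{Hypercube certificate.} To use \Cref{thm:PutBox}, map $[-1,1]^{n+m}$ onto $T$ by the affine change $\bu_j=(1+\bv_j)/2$, using \Cref{lem:trans}. The transformed polynomial $\hat F(\bx,\bv)$ has the same degree $D$ and the same condition number, and is positive on $[-1,1]^{n+m}$. \Cref{thm:PutBox} gives
\[
\hat F\in\cQ(1\pm\bx,1\pm\bv)_{R(n+m)}, \qquad R\ge c\log(n+m)D^2\kappa_T(F)+\bigO(\kappa_T(F)^{1/2}).
\]
Mapping back with $\bv=2\bu-1$ (degree $1$, by \Cref{lem:trans}\ref{it:trans2}) yields $F\in\cQ(1\pm\bx,\bu,1-\bu)_{R(n+m)}$, since $1+\bv_j=2\bu_j$ and $1-\bv_j=2(1-\bu_j)$.

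\emph{Substitution.} Setting $\bu=\tilde\bg(\bx)$ multiplies degrees by at most $d_\bg$ and gives
\[
f\in\cQ(1\pm\bx,\tilde\bg,1-\tilde\bg)_{d_\bg R(n+m)}.
\]
By \Cref{lem:SubsQTR}\ref{it:lem1} and the scaling step this lies in $\cQ(\bg)_{d_\bg R(n+m)+t_1+t_2}$.

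\emph{Bookkeeping.} This is the step I expect to be the main obstacle. We must write $d_\bg R(n+m)+t$ in the form $r n$. Since $n+m\le (1+m)n$ and $\log(n+m)\le \log(n)+\log(1+m)$ for $n\ge 2$ (the case $n=1$ is absorbed by adjusting constants), the factors $d_\bg(1+m)$ and the constants $t_1,t_2$ can be absorbed into $C_\bg$ and the $\bigOg$ term.

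For $d\ge d_\bg$ we have $D^2=d^2$, so the main term becomes
\[
\log(n)\,d^2\cdot d^{4\Ll_\bg}\kappa_{S_\bg}(f)^{2\Ll_\bg}=\log(n)\,d^{4\Ll_\bg+2}\kappa_{S_\bg}(f)^{2\Ll_\bg}.
\]
The square-root term becomes $\bigOg(d^{2\Ll_\bg}\kappa_{S_\bg}(f)^{\Ll_\bg})$. For $d<d_\bg$ we have $D\le 2d_\bg$, which is a $\bg$-constant and is again absorbed. Finally, the hypercube theorem's conclusion holds for every $R$ above its threshold, so every $r$ above the stated threshold works.

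The delicate point is checking that no hidden dependence on $n$ or $d$ is lost in these absorptions, in particular that the constant in $\bigO(\kappa^{1/2})$ of \Cref{thm:PutBox} is independent of $n$ and $d$.
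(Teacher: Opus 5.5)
Your proposal is correct and is essentially the paper's own proof: the same normalization $\bar g_j=g_j/\normone{g_j}$ via \Cref{lem:norm1f-f}\ref{normfit:three}, the same lifting on $T=[-1,1]^n\times[0,1]^m$ from \Cref{thm:enhLift}, the same affine change of variables and appeal to \Cref{thm:PutBox}, and the same substitution $\bu\leftarrow\bbg$ followed by \Cref{lem:SubsQTR}\ref{it:lem1}. Your bookkeeping is more careful than the paper's, which drops the factor $d_\bg$ incurred by the substitution and never converts the degree $r(n+m)$ and the factor $\log(n+m)$ into the stated form $rn$ and $\log n$; two small corrections remain, though:
\begin{enumerate}
\item $D=d$ requires $d\ge 2d_\bg$, not $d\ge d_\bg$; simply use $D\le 2d_\bg d$, as the paper does.
\item The case $n=1$ cannot be ``absorbed by adjusting constants.'' Since $\log 1=0$, the main term vanishes, but the contribution $c\,d_\bg(1+m)\log(1+m)D^2\kappa_T(F)$ is of order $d^{4\Ll_\bg+2}\kappa_{S_\bg}(f)^{2\Ll_\bg}$ and does not fit into $\bigOg(d^{2\Ll_\bg}\kappa_{S_\bg}(f)^{\Ll_\bg})$. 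So for $n=1$, what you (and the paper) actually prove is the bound with $\log(n+m)$ in place of $\log n$.
\end{enumerate}
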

\begin{proof}
Take $\newell_\bg$, depending only on $\bg$,  such that
$
1 - \|\bx\|^2_2 \in \cQ(\bg)_{\newell_\bg}$.
Using that for any $i \in \{1,\dots,n\}$,
\[
1 \pm \bx_i
= \frac12\Bigl(1- \|\bx\|^2_2 + (1\pm \bx_i)^2 +\sum_{j\neq i} \bx_j^2\Bigr),
\]
we have
\begin{equation}\label{eq:Mbound}
1 \pm \bx  \subset \cQ(\bg)_{\newell_\bg}.
\end{equation}
From \Cref{lem:norm1f-f}\ref{normfit:three}, we have that for $j=1,\dots,m$,
\begin{equation}\label{eq:Nbound}
\normone{g_j} - g_j(\bx) \subset \cQ(1 \pm \bx)_{2d_\bg+1} \subseteq \cQ(\bg)_{\newell_\bg+2d_\bg+1},
\end{equation}
where the second inclusion follows from~\eqref{eq:Mbound} and~\Cref{lem:SubsQTR}\ref{it:lem1}.
For $j=1,\dots,m$, let
\[
\bar g_j(\bx) := \frac{g_j(\bx)}{\normone{g_j}}.
\]
If follows that
\[
\normsup{\bar g_j} = \frac{\normsup{g_j}}{\normone{g_j}} \le 1.
\]
From~\eqref{eq:Nbound} we also have
\begin{equation}\label{eq:Puttrans}
1- \bbg \subseteq \cQ(\bg)_{\newell_\bg+2d_\bg+1}.
\end{equation}
Also note that, $1 - \|\bx\|^2_2 \in \cQ(\bg)$ implies that
\[
S_{\bbg} = S_\bg
\subseteq [-1,1]^n.
\]

Let $T = [-1,1]^n \times [0,1]^m$,
then
$\{(x,\barg(x)): x\in S_{\bbg}\} \subseteq T$.
Since  $f > 0$ on $S_{\bbg}$, by Theorem~\ref{thm:enhLift},
there exists $F \in \R[\bx, \bu]$ with $\deg F = \max\{d, 2d_{\bg}\}$, such that
\begin{equation}
\label{eq:Putprojection}
f(\bx) = F(\bx,\barg(\bx)),
\end{equation}
and
\begin{equation}\label{eq:kPut}
\min_{(x,u)\in T}F(x,u) \ge \tfrac 12 f_{\min} \;\;\text{ and }\;\;
\max_{(x,u)\in T} F(x,u) \le (1+ 2m\cl_{\barg}^2)(4d^2 \kappa_{S_{\barg}}(f))^{2\Ll_{\barg}}f_{\min}.
\end{equation}
Now we consider the  polynomial transformations $(\bx,\bu) \mapsto (\bx,\frac {1+\bu}2)$ mapping $[-1,1]^{n+m}$ to $T$ (of degree $1$) and its inverse  $(\bx,\bu) \mapsto (\bx,2\bu-1)$.
By \Cref{lem:trans}\ref{it:trans1}, $F(\bx,\bu) > 0$ on $T$ implies $F(\bx,\frac {1+\bu}2) > 0$ on $[-1,1]^{n+m}$. Thus, it follows from Theorem~\ref{thm:PutBox} that there is an absolute constant $c >0$, such that taking  \begin{equation}\label{eq:rPut}
r
= 4c d_{\barg}^2 d^2 \log (n+m) \tilde{\kappa}
+ \bigO(\tilde{\kappa}^{1/2}) \;\text{ with }\; \tilde{\kappa} := \frac {\max_{(x,u) \in [-1,1]^{n+m}} F(x,\frac {1+u}2)}{\min_{(x,u) \in [-1,1]^{n+m}} F(x,\frac {1+u}2)},
\end{equation}
we have
\[
F(\bx,\tfrac {1+\bu}2)
\in \cQ(1 \pm \bx, 1 \pm \bu)_{r(n+m)}.
\]
Thus, by \Cref{lem:trans}\ref{it:trans2}
\begin{equation}\label{eq:Fputback}
F(\bx,\bu)
\in \cQ(1 \pm \bx, 1 \pm (2\bu-1))_{r(n+m)} = \cQ(1 \pm \bx, \bu, 1-\bu)_{r(n+m)}.
\end{equation}
Substituting $\bu \leftarrow \barg$ in~\eqref{eq:Fputback} and using~\eqref{eq:Putprojection}, we obtain
\[
  f(\bx) = F(\bx,\barg(\bx)) \in   \cQ(1 \pm \bx, \barg, 1-\barg)_{r(n+m)}.
\]
Therefore,  by~\eqref{eq:Mbound},~\eqref{eq:Puttrans}, and \Cref{lem:SubsQTR}\ref{it:lem1}
\[
  f(\bx) \in \cQ(\bg)_{\newell_\bg+2d_\bg + 1+r(n+m)}.
\]
To conclude the proof notice that in~\eqref{eq:rPut}, 
\begin{align*}
\tilde{\kappa} &= \frac {\max_{(x,u) \in T} F(x,u)}{\min_{(x,u) \in T} F(x,u)}\\
& \le 2(1+ 2m\cl_{\bbg}^2)(4d^2 \kappa_{S_\bg}(p))^{2\Ll_{\bbg}} &\text{(by \eqref{eq:kPut})} \\
& \le  2(1+ 2m(\max_{j=1,\dots,m} \normone{g_j})\cl_{\bg}^2)(4d^2 \kappa_{S_\bg}(p))^{2\Ll_{\bg}} &\text{(by Lemma~\ref{lem:scaled-loj})}
\end{align*}
\end{proof}

\subsection{Effective Schm\"{u}dgen Positivstellensatz}
\label{sec:Schmu}

To improve the degree bound of Schm\"{u}dgen's Positivstellensatz on a general compact semialgebraic set, we leverage  the following degree bound on Schm\"{u}dgen's Positivstellensatz on the hypercube.
\begin{theorem}[{\cite[][Cor. 3]{laurent2023effective}}]
\label{thm:SchBox}
Let $f \in \mathbb{R}[\bx]$ with $\deg f = d$. If $f > 0$ in $[-1,1]^n$, Then
\[
f \in \cT(1 \pm x)_{rn}
\text{ whenever } r \ge C(n,d)\kappa_{[-1,1]^n}(p)^{1/2} + \bigO(dn^{1/2})   ,
\]
where $C(n,d) > 0$ depends only on $n$ and $d$, and may be chosen to be polynomial in $n$ for fixed $d$, or polynomial in $d$ for fixed $n$.
In particular,
\begin{equation}
\label{eq:Schconst}
C(n, d)\le \pi n^{1/2}({\scriptstyle \sqrt{2}}(d + 1))^{\frac n2 + 1 } \text{ and } C(n, d) \le 2\pi ({\scriptstyle \sqrt{2}}(n + 1))^{\frac {d+1}2}.
\end{equation}
\end{theorem}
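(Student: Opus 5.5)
The plan is to follow the \emph{polynomial kernel method} on $[-1,1]^n$, with the Chebyshev basis as the eigenbasis. Let $\mu$ be the product Chebyshev measure $\prod_i \pi^{-1}(1-y_i^2)^{-1/2}dy_i$ on $[-1,1]^n$, and $T_k$ the Chebyshev polynomials, orthogonal for $\mu$.

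\textbf{Step 1: the kernel.} For each degree $r$, build a univariate kernel
\[
K^{(1)}_r(x,y)=1+2\sum_{k=1}^{r}\lambda_k T_k(x)T_k(y)
\]
with two properties. First, $K^{(1)}_r(x,y)\ge 0$ on $[-1,1]^2$. Second, $|1-\lambda_k|\le c\,k^2/r^2$. A Jackson/Fejér-type kernel achieves this: take the square of a suitable polynomial in $\cos$ and expand, as Laurent and Slot do. Set $K_r(x,y)=\prod_{i=1}^n K^{(1)}_r(x_i,y_i)$. The associated operator is $\mathbf{K}_r p(x)=\int p(y)K_r(x,y)\,d\mu(y)$. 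It is diagonal in the product Chebyshev basis, with $\mathbf{K}_r T_\alpha=\lambda_\alpha T_\alpha$ and $\lambda_\alpha=\prod_i\lambda_{\alpha_i}$. Moreover $\mathbf{K}_r 1=1$.

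\textbf{Step 2: perturbation estimate.} For $r>d$, all $\lambda_\alpha$ with $|\alpha|\le d$ are nonzero. Set $g:=\mathbf{K}_r^{-1}f=\sum_\alpha f_\alpha\lambda_\alpha^{-1}T_\alpha$, where $f=\sum_\alpha f_\alpha T_\alpha$. Then
\[
\|g-f\|_\infty\le \sum_{|\alpha|\le d}|f_\alpha|\,|\lambda_\alpha^{-1}-1| .
\]
Two ingredients bound this sum.
\begin{itemize}
\item The Chebyshev coefficients of $f$ satisfy $|f_\alpha|\le 2^{|{\rm supp}\,\alpha|}\normsup{f}$, since each is an integral of $f$ against a bounded Chebyshev polynomial.
\item From $|1-\lambda_k|\le ck^2/r^2$ we get $|\lambda_\alpha^{-1}-1|=O(|\alpha|^2/r^2)$ once $r\gtrsim d\sqrt n$. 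This is the source of the additive $\bigO(dn^{1/2})$ term.
\end{itemize}
Counting monomials then gives $\|g-f\|_\infty\le C(n,d)^2\normsup{f}/r^2$. Depending on whether one counts over $d$ or over $n$, this yields the two bounds in~\eqref{eq:Schconst}. Hence, if
\[
r\ge C(n,d)\,\kappa_{[-1,1]^n}(f)^{1/2},
\]
then $\|g-f\|_\infty\le f_{\min}$, and so $g\ge 0$ on $[-1,1]^n$. The square root in $\kappa$ comes exactly from the quadratic rate $k^2/r^2$.

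\textbf{Step 3: the certificate.} We have
\[
f=\mathbf{K}_r g=\int g(y)K_r(\cdot,y)\,d\mu(y).
\]
For fixed $y$, each factor $x_i\mapsto K^{(1)}_r(x_i,y_i)$ is a univariate polynomial of degree $r$ that is nonnegative on $[-1,1]$. By the Markov--Lukács theorem it lies in $\cT(1\pm x_i)_r$, with degrees controlled. Hence the product $K_r(\cdot,y)$ lies in $\cT(1\pm\bx)_{rn}$. The integrand is a nonnegative weight $g(y)$ times an element of a closed convex cone in a finite-dimensional space. By Carathéodory, or by replacing $\mu$ with a positive quadrature rule exact in degree $\le 2r$, the integral is a finite nonnegative combination. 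This gives $f\in\cT(1\pm\bx)_{rn}$.

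\textbf{Main obstacle.} I expect the hardest part to be Step 1 together with Step 2: constructing a nonnegative kernel whose eigenvalues satisfy the quadratic bound $1-\lambda_k=O(k^2/r^2)$, rather than the linear bound a Fejér kernel gives. I would first try the Jackson kernel, i.e.\ a squared Fejér kernel, whose coefficients are known explicitly. Then I would track the constants carefully to obtain the explicit forms of $C(n,d)$.
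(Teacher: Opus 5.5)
Your proposal is correct and follows the same route as the source the paper relies on: the paper gives no proof of its own and imports the result from Laurent--Slot, whose argument is this polynomial kernel method (product Jackson kernel in the Chebyshev basis with $1-\lambda_k=\bigO(k^2/r^2)$, the perturbation bound on $\mathbf{K}_r^{-1}f-f$, Markov--Luk\'acs for the univariate factors $K^{(1)}_r(\cdot,y_i)$, and a positive Gauss--Chebyshev quadrature to turn the integral into a finite conic combination). Recovering the exact constants in~\eqref{eq:Schconst} is only bookkeeping in the monomial count, not a new idea; the natural coefficient estimate there is the sharper $|f_\alpha|\le 2^{|\mathrm{supp}\,\alpha|/2}\normsup{f}$, which follows from Cauchy--Schwarz in $L^2(\mu)$.
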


With Theorem~\ref{thm:SchBox} at hand, we can now use the lift-and-project  procedure (Theorem~\ref{thm:enhLift}) to obtain degree bounds for {\schmu}'s {\germanF} for a polynomial $f >0$ on $S_{\bg}$. The degree bound we obtain in this form (\Cref{thm:ourschmu}) has a dependance on the condition measure $\kappa_{S_{\bg}}(f)$ of $\bigO(\kappa_{S_{\bg}}(f)^{\Ll_\bg})$, where $\Ll_\bg$ is the {\L}ojasiewicz exponent associated with the set $S_\bg$ (see \eqref{eq:LojDist}). Pairing this result with the one obtained for Puitnar's {\germanF}
(\Cref{thm:ourputi}), we see that the degree bound on {\schmu}'s {\germanF} elicits the same quadratic improvement that is elicited between the degree bounds for Putinar's and {\schmu}'s {\germansF} over the hypercube $[-1,1]^n$ (see Table~\ref{tab:bounds}).

\begin{theorem}[Effective Schm\"{u}dgen {\germanF}]
\label{thm:ourschmu} Let $f \in \R[\bx]$ with $\deg f = d$, and $g_1,\dots,g_m \in \R[\bx]$ such that $S_\bg \subset (-1,1)^n$. If~$f > 0$ on $S_\bg$, then there is a constant $C_\bg > 0$ depending only on $\bg$ such that
\[
f \in \cT(\bg)_{rn} \text{ whenever } r \ge  C_{\bg} C(n+m,d+2d_{\bg})(d^2\kappa_{S_\bg}(f))^{\Ll_\bg} + \bigOg(d n^{1/2}),
\]
where $C(\cdot,\cdot) > 0$ is bounded in~\eqref{eq:Schconst}.
\end{theorem}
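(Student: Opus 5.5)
The plan mirrors the proof of \Cref{thm:ourputi}, with \Cref{thm:SchBox} replacing \Cref{thm:PutBox}. The Archimedean hypothesis is replaced by $S_\bg\subset(-1,1)^n$, and we need it to express $1\pm\bx$ in the preorder.

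\textbf{Step 1: express $1\pm\bx_i$ in $\cT(\bg)$.} Since $1\pm \bx_i>0$ on the compact set $S_\bg$, Schm\"udgen's {\germanF} (\Cref{thm:certificates}\ref{thm:schmudgen}) gives $1\pm\bx_i\in\cT(\bg)_{\newell_\bg}$ for some $\newell_\bg$ depending only on $\bg$.

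\textbf{Step 2: normalize the constraints.} Set $\bar g_j:=g_j/\normone{g_j}$. Then $\normsup{\bar g_j}\le 1$ and $S_{\bbg}=S_\bg\subseteq[-1,1]^n$. By \Cref{lem:norm1f-f}\ref{normfit:two}, $1-\bar g_j\in\cT(1\pm\bx)_{d_\bg}$. By Step~1 and \Cref{lem:SubsQTR}\ref{it:lem2}, this gives $1-\bbg\subseteq\cT(\bg)_{\newell_\bg d_\bg}$. Here I rely on $\cT(\bbg)=\cT(\bg)$, since the $\bar g_j$ differ from the $g_j$ only by positive scalars.

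\textbf{Step 3: lift.} Apply \Cref{thm:enhLift} with $T=[-1,1]^n\times[0,1]^m$. This produces $F$ with $\deg F=\max\{d,2d_\bg\}\le d+2d_\bg$, $f=F(\bx,\bbg(\bx))$, and
\[\frac{\max_T F}{\min_T F}\le 2(1+2m\cl_{\bbg}^2)(4d^2\kappa_{S_\bg}(f))^{2\Ll_\bg}.\]
By \Cref{lem:scaled-loj}, $\Ll_{\bbg}=\Ll_\bg$ and $\cl_{\bbg}\le(\max_j\normone{g_j})\cl_\bg$.

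\textbf{Step 4: certify on the box.} Compose with the affine map $(\bx,\bu)\mapsto(\bx,\frac{1+\bu}{2})$ so that the domain becomes $[-1,1]^{n+m}$. Then apply \Cref{thm:SchBox} in $n+m$ variables and degree at most $d+2d_\bg$. It gives membership in $\cT(1\pm\bx,1\pm\bu)_{R(n+m)}$ whenever
\[R\ge C(n+m,d+2d_\bg)\tilde\kappa^{1/2}+\bigO((d+2d_\bg)(n+m)^{1/2}).\]
The square root is the source of the gain: $\tilde\kappa^{1/2}\le C'_\bg(d^2\kappa_{S_\bg}(f))^{\Ll_\bg}$. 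Undoing the affine map as in \eqref{eq:Fputback} (\Cref{lem:trans}\ref{it:trans2} with a degree-one map) gives $F\in\cT(1\pm\bx,\bu,1-\bu)_{R(n+m)}$.

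\textbf{Step 5: project.} Substitute $\bu\leftarrow\bbg(\bx)$. Each product of generators of degree at most $R(n+m)$ in the lifted variables becomes a product of $1\pm\bx$, $\bbg$, $1-\bbg$ of degree at most $d_\bg R(n+m)$. This gives $f\in\cT(1\pm\bx,\bbg,1-\bbg)_{d_\bg R(n+m)}$.

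Then use Steps~1--2 with \Cref{lem:SubsQTR}\ref{it:lem2} to land in $\cT(\bg)$. This costs another multiplicative factor of at most $\newell_\bg d_\bg$, and the resulting degree is of the form $\mathcal{O}_\bg(R(n+m))$. Absorbing $m$ and all $\bg$-dependent factors into $C_\bg$, and writing the degree as $rn$, yields the stated bound
\[r\ge C_\bg C(n+m,d+2d_\bg)(d^2\kappa_{S_\bg}(f))^{\Ll_\bg}+\bigOg(dn^{1/2}).\]

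\textbf{Main obstacle.} The delicate point is the degree bookkeeping in Step~5. In the preorder, products of generators multiply degrees, so \Cref{lem:SubsQTR}\ref{it:lem2} is multiplicative rather than additive. The generators $1-\bbg$ and $1\pm\bx$ must therefore have degrees bounded by a $\bg$-only constant, so that the blow-up is a constant factor on $R$ absorbed into $C_\bg$. Establishing this relies on Step~1, which uses only the qualitative Schm\"udgen theorem and so is acceptable, since $\newell_\bg$ depends only on $\bg$. The remaining check is that the additive term $\bigO((d+2d_\bg)(n+m)^{1/2})$, after multiplication by these constants, is indeed $\bigOg(dn^{1/2})$.
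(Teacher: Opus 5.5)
Your proposal is correct and follows the paper's proof step for step: Schm\"udgen's theorem gives $1\pm\bx\subset\cT(\bg)_{\newell_\bg}$, you normalize by $\normone{g_j}$ via \Cref{lem:norm1f-f}, lift with \Cref{thm:enhLift}, apply the hypercube bound of \Cref{thm:SchBox} (whose square root halves the exponent), and project back with \Cref{lem:SubsQTR}. Your degree bookkeeping is slightly more careful than the paper's: you account for the factor $d_\bg$ incurred when substituting $\bu\leftarrow\bbg(\bx)$, where the paper keeps degree $r(n+m)$ at that step, and you use the correct lifting set $[-1,1]^n\times[0,1]^m$; both discrepancies are absorbed into $C_\bg$.
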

\begin{proof} By compactness of $S_\bg \subset (-1,1)^n$, it follows from
\Cref{thm:certificates}\ref{thm:schmudgen} that there is $\newell_\bg$, depending only on~$\bg$, such that
\begin{equation}
\label{eq:MSbound}
1 \pm \bx \subset \cT(\bg)_{\newell_\bg}.
\end{equation}
From \Cref{lem:norm1f-f}\ref{normfit:two}, we have that for $j=1,\dots,m$,
\begin{equation}\label{eq:NSbound}
\normone{g_j} - g_j(\bx) \subset \cT(1 \pm \bx)_{d_\bg} \subseteq \cT(\bg)_{\newell_\bg d_\bg},
\end{equation}
where the second inclusion follows from~\eqref{eq:MSbound} and \Cref{lem:SubsQTR}\ref{it:lem2}.
For $j=1,\dots,m$, let
\[
\bar g_j(\bx) := \frac{g_j(\bx)}{\normone{g_j}}.
\]
If follows that
\[
\normsup{\bar g_j} = \frac{\normsup{g_j}}{\normone{g_j}} \le 1.
\]
From~\eqref{eq:NSbound} we also have
\begin{equation}\label{eq:Strans}
1- \bbg \subseteq \cT(\bg)_{\newell_\bg d_\bg}.
\end{equation}
Also note that
\[
S_{\bbg} = S_\bg
\subseteq [-1,1]^n.
\]

Let $T = [-1,1]^n \times [0,1]^n $, then  $\{(x,\barg(x)): x\in S_{\bbg}\} \subseteq T$.
Since  $f>0$ on $S_{\bbg}$, by Theorem~\ref{thm:enhLift},
there is  $F \in \R[\bx, \bu]$ with  $\deg F = \max\{d, 2d_\bg\} \le d+2d_{\bg}$, such that
\begin{equation}
\label{eq:Schprojection}
f(\bx) = F(\bx,\barg(\bx)),
\end{equation}
and
\begin{equation}\label{eq:minmaxFSch}
\min_{(x,u)\in T}F(x,u) \ge \tfrac 12 f_{\min} \text{ and } \max_{(x,u)\in T} F(x,u)  \le (1+ 2m\cl_{\barg}^2)(4d^2 \kappa_{S_{\barg}}(f))^{2\Ll_{\barg}}f_{\min}.
\end{equation}
Now we consider the  polynomial transformations $(\bx,\bu) \mapsto (\bx,\frac {1+\bu}2)$ mapping $[-1,1]^{n+m}$ to $T$ (of degree $1$) and its inverse  $(\bx,\bu) \mapsto (\bx,2\bu-1)$.
By \Cref{lem:trans}\ref{it:trans1}, $F(\bx,\bu) > 0$ on $T$ implies $F(\bx,\frac {1+\bu}2) > 0$ on $[-1,1]^{n+m}$. Thus, it follows from Theorem~\ref{thm:SchBox} that there is a constant $C(n+m,d+2d_{\bg}) >0$ (see~\eqref{eq:Schconst}), such that taking
\begin{equation}\label{eq:rSch}
\begin{gathered}
r =  C(n+m,d+2d_{\bg})\tilde{\kappa}^{1/2} + \bigO((d+2d_{\bg})(n+m)^{1/2}) \\
\text{with }
\tilde{\kappa} := \frac {\max_{(x,u) \in [-1,1]^{n+m}} F(x,\frac {1+u}2)}
{\min_{(x,u) \in [-1,1]^{n+m}} F(x,\frac {1+u}2)},
\end{gathered}
\end{equation}
we have
\begin{equation*}
F(\bx,\tfrac {1+\bu}2) \in \cT(1 \pm \bx, 1 \pm \bu)_{r(n+m)}.
\end{equation*}
Thus, by \Cref{lem:trans}\ref{it:trans2}
\begin{equation}\label{eq:FSchback}
F(\bx,\bu)
\in \cT(1 \pm \bx, 1 \pm (2\bu-1))_{r(n+m)} = \cT(1 \pm \bx, \bu, 1-\bu)_{r(n+m)}.
\end{equation}
Substituting $\bu \leftarrow \barg$ in~\eqref{eq:FSchback} and using~\eqref{eq:Schprojection}, we obtain
\[
  f(\bx) = F(\bx,\barg(\bx)) \in  \cT(1 \pm \bx, \barg, 1-\barg)_{r(n+m)}.
\]
Therefore,  by~\eqref{eq:MSbound},~\eqref{eq:Strans}, and \Cref{lem:SubsQTR}\ref{it:lem2},
\[
  f(\bx) \in \cT(\bg)_{\newell_\bg d_\bg r(n+m) }.
\]
To conclude the proof, notice in~\eqref{eq:rSch}
\begin{align*}
\tilde{\kappa} &= \frac {\max_{(x,u) \in T} F(x,u)}{\min_{(x,u) \in T} F(x,u)}\\
& \le 2(1+ 2m\cl_{\bbg}^2)(4d^2 \kappa_{S_\bg}(f))^{2\Ll_{\bbg}} &\text{(by~\eqref{eq:minmaxFSch})} \\
& = 2(1+ 2m(\max_{j=1,\dots,m} \normone{g_j})\cl_{\bg}^2)(4d^2 \kappa_{S_\bg}(p))^{2\Ll_{\bg}}. &\text{(by \Cref{lem:scaled-loj})}
\end{align*}
\end{proof}

\subsection{Effective Krivine-Stengle and the extended-Handelman Po\-si\-tiv\-ste\-llen\-s\"atze}
\label{sec:both}

Previously, we focused on degree bounds for {\germansF} based on SOS polynomials. We now analyze degree bounds for {\germansF} based on non-negative constants, a topic largely unexplored in the literature. This gap likely exists because hierarchies employing such {\germansF} exhibit poor convergence~\citep[see, e.g.,][]{lasserre2013lagrangian, PenaVZ07}. However, the recent work in~\cite{kuryatnikova2024reducing} motivates formally studying degree bounds for this class of certificates.

Existing degree bounds for Handeman's Theorem on the hypercube are written in terms of the norm $L(f)$. In \Cref{thm:HandBox} we state this result in terms of the $\normsup{f}$. To do this we first bound $L(f)$ using $\normsup{f}$. 
\begin{lemma}\label{lem:Lftosup}
 Let \( f \in \R[\bx]\) with  \(\deg f = d\).  Then
\[
L(f)
\le
60^d \|f\|_{\sup,[0,1]^n}.
\]
\end{lemma}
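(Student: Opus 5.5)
The plan is to split $f$ into homogeneous components and bound each component's weighted coefficients in two steps. First, a univariate Chebyshev argument controls each homogeneous component on the simplex. Second, a polarization argument converts that control into control of the weighted coefficients $f_\alpha \alpha!/|\alpha|!$. Write $f=\sum_{k=0}^d f_k$ with $f_k(\bx)=\sum_{|\alpha|=k} f_\alpha \bx^\alpha$. Let $\Delta:=\{x\in\R^n_+:\sum_i x_i\le 1\}\subseteq[0,1]^n$ and $M:=\|f\|_{\sup,[0,1]^n}$.

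\emph{Step 1 (homogeneous parts on $\Delta$).} Fix $x\in\Delta$ and consider the univariate polynomial $\phi_x(t):=f(tx)=\sum_{k=0}^d f_k(x)t^k$. Since $tx\in\Delta\subseteq[0,1]^n$ for $t\in[0,1]$, we have $|\phi_x|\le M$ on $[0,1]$. The classical Chebyshev extremal coefficient bound, applied after the affine change $t\mapsto 2t-1$, bounds every coefficient of a degree-$d$ polynomial bounded by $1$ on $[0,1]$. The extremal polynomial is the shifted Chebyshev polynomial $T_d(2t-1)$, whose coefficients have absolute sum $|T_d(-3)| \le (3+2\sqrt2)^d$. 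Hence
\[
|f_k(x)|\le (3+2\sqrt2)^d M \quad\text{for all } x\in\Delta,\ k\le d.
\]

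\emph{Step 2 (polarization).} Let $A_k$ be the symmetric $k$-linear form with $A_k(y,\dots,y)=f_k(y)$. For a multi-index $\alpha$ with $|\alpha|=k$, take $y_1,\dots,y_k$ to be the unit vectors $e_j$, with $e_j$ repeated $\alpha_j$ times. Expanding $f_k(\sum_i y_i)$ shows that $A_k(y_1,\dots,y_k)=f_\alpha\,\alpha!/k!$, which is exactly the quantity appearing in $L(f)$. The polarization identity gives
\[
A_k(y_1,\dots,y_k)=\frac{1}{k!}\sum_{\varepsilon\in\{0,1\}^k}(-1)^{k-|\varepsilon|} f_k\Bigl(\sum_i\varepsilon_i y_i\Bigr).
\]
Apply it to the scaled vectors $y_i/k$; every point $\sum_i \varepsilon_i y_i/k$ then lies in $\Delta$. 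Using homogeneity, $A_k(y_1,\dots,y_k)=k^kA_k(y_1/k,\dots,y_k/k)$, so
\[
|f_\alpha|\tfrac{\alpha!}{k!}\le \frac{k^k 2^k}{k!}\sup_{\Delta}|f_k|\le (2e)^k\sup_\Delta |f_k|,
\]
where the last step uses $k^k/k!\le e^k$.

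\emph{Combining.} Steps 1 and 2 give
\[
L(f)\le (2e)^d(3+2\sqrt2)^d M\le (5.44\cdot 5.83)^d M\le 60^d M.
\]
The main point requiring care is Step 1: the coefficient bound for $\phi_x$ must be uniform in $x\in\Delta$ and must hold on the interval $[0,1]$ rather than $[-1,1]$. This is why I would state it through the shifted Chebyshev polynomial and its value $|T_d(-3)|$. Step 2 only needs the standard polarization formula, and the scaling by $1/k$ is exactly what keeps all evaluation points inside $\Delta$.
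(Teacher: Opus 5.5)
Your proof is correct and is essentially the paper's argument: decompose $f$ into homogeneous parts, restrict to rays $t\mapsto f(tx)$ to bound each part, and polarize at the scaled points $\frac1k\sum_i\varepsilon_i y_i$ to get the factor $(2e)^k$; working on $\Delta$ instead of $[0,1]^n$ changes nothing. The one difference is the univariate coefficient estimate. You use the shifted-Chebyshev bound, which does hold for every coefficient on $[0,1]$ by V.~Markov's inequality $|p^{(k)}(-1)|\le T_d^{(k)}(1)$ at the endpoint, whereas the paper uses Lagrange interpolation at equidistant nodes and gets $(4e)^d$. Your route gives the sharper constant $(2e(3+2\sqrt2))^d\approx 31.7^d$ in place of the paper's $(8e^2)^d\approx 59.1^d$.
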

\begin{proof}
We first consider the homogeneous case. Let
\(
p(\bx)=\sum_{\alpha \in \N^n: \|\alpha\|_1=k} p_\alpha \bx^\alpha
\)
be a homogeneous polynomial  with \(\deg p = k\). Let \(A\in \operatorname{Sym}^k(\mathbb{R}^n)\) be the associated symmetric \(k\)-linear form, so that
\(
p(\bx)=A(\bx,\dots,\bx).
\)
For every multi-index \(\alpha \in \N^n\) with \(\|\alpha\|_1=k\), we have
\begin{equation}\label{eq:Avsp}
p_\alpha \frac{\alpha!}{k!} =
A(
\underbrace{e_1,\ldots,e_1}_{\alpha_1},
\ldots,
\underbrace{e_n,\ldots,e_n}_{\alpha_n}
)
\le
\sup_{\bx_1,\ldots,\bx_k \in [0,1]^n}
|A(\bx_1,\ldots,\bx_k)|.
\end{equation}
We use the finite-difference polarization identity
\[
k!A(x_1,\ldots,x_k)
=
\sum_{S\subseteq\{1,\ldots,k\}}
(-1)^{k-|S|}
p\left(\sum_{j\in S}x_j\right).
\]
We obtain
\begin{align*}
k!|A(x_1,\ldots,x_k)|
&\le
\sum_{S\subseteq\{1,\ldots,k\}}
\left|
p\left(\sum_{j\in S}x_j\right)
\right|\\
&= \sum_{S\subseteq\{1,\ldots,k\}}
k^k
\left|
p\left(\frac{1}{k}\sum_{j\in S}x_j\right)
\right| \\
&\le
(2k)^k\|p\|_{\sup,[0,1]^n},
\end{align*}
where we have used  that \(x_j\in[0,1]^n\) implies $\frac{1}{k}\sum_{j\in S}x_j \in [0,1]^n$ for all $S\subseteq\{1,\ldots,k\}$.

Using \(k^k/k!\le e^k\), we get
\[
|A(x_1,\ldots,x_k)|
\le
(2e)^k\|p\|_{\sup,[0,1]^n}.
\]
And using~\eqref{eq:Avsp} yields
\[
L(p)
\le
(2e)^k\|p\|_{\sup,[0,1]^n}.
\]
Now, consider the the homogeneous decomposition of \(f\)
\[
f(x)=\sum_{|\alpha|\le d}f_\alpha x^\alpha
=
\sum_{k=0}^d p_k(x).
\]
We claim that for every \(0\le k\le d\),
\begin{equation}\label{eq:Lfclaim}
\|p_k\|_{\sup,[0,1]^n}
\le
(4e)^d
\|f\|_{\sup,[0,1]^n}.
\end{equation}
Using the claim we obtain 
\[L(f) \le \max_{k=0,\dots,d} L(p_k) \le \max_{k=0,\dots,d} (2e)^k\|p_k\|_{\sup,[0,1]^n} \le (8e^2)^d
\|f\|_{\sup,[0,1]^n}.\]

Now to prove Claim~\eqref{eq:Lfclaim}, fix \(x\in[0,1]^n\), and define the univariate polynomial
\[
q_x(t):=f(tx) =\sum_{k=0}^d p_k(x)t^k.
\]
By using the  the Lagrange interpolation formula applied at the equidistant nodes $0,1/d,\dots,1$ together with elementary coefficient bounds for the Lagrange basis polynomials, we obtain the following univariate coefficient estimate: if
\[
q(t)=\sum_{k=0}^d a_kt^k
\text{ then }
|a_k|\le (4e)^d  \|q\|_{\sup,[0,1]^n}
\qquad \text{for all }0\le k\le d.
\]
Applying this estimate to \(q_x\), we get
\[
|p_k(x)|
\le (4e)^d  \|q_x\|_{\sup,[0,1]^n}
\le (4e)^d  \|f\|_{\sup,[0,1]^n}.
\]
Taking the supremum over \(x\in[0,1]^n\) yields Claim~\eqref{eq:Lfclaim}. 
\end{proof}

We leverage  the following degree bound on Handelman's Positivstellensatz on the hypercube, that readily follows from~\citep[][Thm. 1.4]{DeKlerkLaurent2010},  to obtain degree bounds on the
extended-Handelman's {\germanF}.

\begin{theorem}\label{thm:HandBox}
Let $f \in \mathbb{R}[\bx]$ with $\deg f = d$. If $f > 0$ in $[-1,1]^n$, Then
\[
f \in \cR(1\pm\bx)_{rn}
\text{
whenever }
r \ge 60^d  d^3 n^{d} \kappa_{[-1,1]^n}(f).
\]
\end{theorem}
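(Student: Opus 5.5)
We will derive Theorem~\ref{thm:HandBox} from the de Klerk--Laurent bound for Handelman's Positivstellensatz on the unit cube, \citep[][Thm.~1.4]{DeKlerkLaurent2010}. That result states, roughly, the following. If $h>0$ on $[0,1]^n$ with $\deg h = d$, then $h$ is a nonnegative combination of the Bernstein-type products $\bx^{\alpha}(1-\bx)^{\beta}$ with $|\alpha|+|\beta|\le k$ (in our notation, $h\in\cR(\bx,1-\bx)_{k}$) as soon as $k$ exceeds a quantity of order $\tfrac{L(h)}{h_{\min}}\,\binom{d+1}{3} n^{d}$, up to a small absolute factor.

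The plan has three steps. First, transport the problem from $[-1,1]^n$ to $[0,1]^n$ via the affine map $Z(\bx)=2\bx-1$ and set $h(\bx):=f(2\bx-1)$. By \Cref{lem:trans}\ref{it:trans1}, $h>0$ on $[0,1]^n$, and $\deg h=d$. Because $Z$ is a bijection of the cubes, $h_{\min}=f_{\min}$ over $[-1,1]^n$ and $\|h\|_{\sup,[0,1]^n}=\normsup{f}$. Second, apply \Cref{lem:Lftosup} to $h$ to get $L(h)\le 60^d\normsup{f}$. Hence $L(h)/h_{\min}\le 60^d\kappa_{[-1,1]^n}(f)$. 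Third, plug this into the de Klerk--Laurent bound. Using $\binom{d+1}{3}\le d^3$ and absorbing the absolute constant, we obtain $h\in\cR(\bx,1-\bx)_k$ whenever $k\ge 60^d d^3 n^d\kappa_{[-1,1]^n}(f)$.

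Finally, we pull the certificate back. Substituting $\bx\leftarrow\frac{1+\bx}{2}$ sends $\bx\mapsto \frac{1+\bx}{2}$ and $1-\bx\mapsto\frac{1-\bx}{2}$, which are positive multiples of the generators $1\pm\bx$. The substitution is affine, so each product $\bx^\alpha(1-\bx)^\beta$ becomes a positive multiple of $(1+\bx)^\alpha(1-\bx)^\beta$ of the same degree. This is the $\cR$-analogue of \Cref{lem:trans}\ref{it:trans2} with $\deg Z=1$, and it gives $f\in\cR(1\pm\bx)_k$. Since $n\ge1$, taking $k=rn$ with $r$ as in the statement certainly exceeds the threshold, so $f\in\cR(1\pm\bx)_{rn}$.

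The main obstacle is matching constants in the third step. The de Klerk--Laurent theorem is phrased for Bernstein coefficients with a specific normalization of $L(\cdot)$ and a factor such as $\binom{d+1}{3}$ or $d(d-1)/2$. I would first verify that their norm coincides with the weighted norm \eqref{eq:wnorm} defined here, or is dominated by it. I would then check that their residual absolute constants are at most $1$ after using $\binom{d+1}{3}\le d^3$. If a small extra factor appears, it can be absorbed into $60^d$, since \Cref{lem:Lftosup} actually proves the sharper bound $(8e^2)^d\le 60^d$.
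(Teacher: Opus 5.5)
Your proposal is correct and follows the paper's proof step for step: transport to $[0,1]^n$ via $\bx\mapsto 2\bx-1$, bound $L(\tilde f)\le 60^d\normsup{f}$ with \Cref{lem:Lftosup}, invoke Thm.~1.4 of de Klerk--Laurent together with $\binom{d+1}{3}\le d^3$, and pull back through the degree-one, $\cR$-version of \Cref{lem:trans}\ref{it:trans2}. One small correction to your paraphrase: as used in the paper, that theorem gives a Bernstein representation of degree $r$ in each variable, i.e.\ $\tilde f\in\cR(\bx,1-\bx)_{rn}$ once $r\ge\binom{d+1}{3}n^d L(\tilde f)/\tilde f_{\min}$, so your claim that total degree $k$ above the threshold suffices overstates it by a factor $n$; but you only apply it at $k=rn$, which is exactly the theorem's guarantee, so your conclusion is unaffected and no extra constant needs absorbing.
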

\begin{proof}
Consider the  polynomial transformations $\bx \mapsto 2\bx-1$ mapping $[0,1]^{n}$ to $[-1,1]^n$  and its inverse  $\bx \mapsto \tfrac{1+\bx}{2}$.
By \Cref{lem:trans}\ref{it:trans1}, $f(\bx) > 0$ on $[-1,1]^n$ implies $\tilde f(\bx) := f(2\bx-1) > 0$ on $[0,1]^n$.
Thus, by~\citep[][Thm. 1.4]{DeKlerkLaurent2010}
we have
\[
\tilde f(\bx) \in \cR(\bx, 1-\bx)_{rn} \text{ whenever } r \ge \binom{d+1}{3} n^d \frac{L(\tilde f)}{\min\{\tilde f(x): x \in [0,1]^{n}\}},
\]
where $L(\tilde f)$ is defined in~\eqref{eq:wnorm}.
Therefore, by \Cref{lem:trans}\ref{it:trans2},
\begin{equation}\label{eq:FHandback}
f(\bx) \in \cR(\tfrac{1+\bx}{2}, 1 - \tfrac{1+\bx}{2})_{rn} = \cR(1 \pm \bx)_{rn}.
\end{equation}
The result then follows from
\[
\min\{\tilde f(x): x \in [0,1]^{n}\} = \min\{f(x): x \in [-1,1]^{n}\} \text{ and }
\binom{d+1}{3} \le d^3,
\]
combined with
\[ L(\tilde f) \le 60^d \max\{\tilde f(x): x \in [0,1]^n\} =  60^d  \|f\|_{\sup}\]
that follows by \Cref{lem:Lftosup}.
\end{proof}

With Theorem~\ref{thm:HandBox} at hand, we can now use the lifting procedure (Theorem~\ref{thm:enhLift}) to obtain an effective version of the extended-Handelman {\germanF}  (\Cref{thm:certificates}\ref{it:nosos}), whose proof is analogous to the one of \Cref{thm:ourputi}.
The degree bound we obtain in this form (\Cref{thm:genHand}) has a dependance on the condition measure $\kappa_{S_{\bg}}(f)$ of $\bigO(\kappa_{S_{\bg}}(f)^{2\Ll_\bg})$, where~$\Ll_\bg$ is the {\L}ojasiewicz exponent associated with the set $S_\bg$ (see \eqref{eq:LojDist}). Pairing this result with the one obtained for Puitnar's {\germanF}
(\Cref{thm:ourputi}), we see that we obtain a degree bound for the extended-Handelman {\germanF} that elicits the same dependance on $\kappa_{S_{\bg}}(f)$ that is elicited by the degree bound obtained for the Putinar {\germanF}. This is notable because Putinar's {\germanF} uses SOS to certify nonnegativity, leading to a hierarchy of SDP approximations, while the extended-Handelman {\germanF} uses nonegativy constants (SOS of degree zero) to certify nonnegativity, leading to a hierarchy of LP approximations.

\begin{theorem}[Effective extended-Handelman {\germanF}]\label{thm:genHand}
Let $f \in \R[\bx]$ with $\deg f = d$, and $g_1,\dots,g_m \in \R[\bx]$ such that $S_\bg \subset [-1,1]^n$. If~$f > 0$ on $S_\bg$, then there is a constant $C_\bg > 0$ depending only on $\bg$ such that
\[
f \in \cR(1\pm \bx, \bg)_{r(n+m)},
\text{ whenever }
r =  C_\bg 60^d d^{4\Ll_\bg + 3} n^{2d_\bg d} \kappa_{S_\bg}(f)^{2\Ll_\bg}.
\]
\end{theorem}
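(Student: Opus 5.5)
We follow the scheme already used in the proofs of \Cref{thm:ourputi} and \Cref{thm:ourschmu}: rescale the constraints, lift with \Cref{thm:enhLift}, certify on a hypercube with \Cref{thm:HandBox}, and then substitute back. Here the target cone $\cR(1\pm\bx,\bg)$ already contains the generators $1\pm\bx$. So no Archimedean or \schmu-type argument is needed to express $1\pm\bx$ in terms of $\bg$. We only need to handle $1-\bbg$ and the scaling of $\bg$.

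\emph{Rescaling.} Set $\bar g_j := g_j/\normone{g_j}$, so that $\normsup{\bar g_j}\le 1$ and $S_{\bbg}=S_\bg\subseteq[-1,1]^n$. By \Cref{lem:norm1f-f}\ref{normfit:one}, $\normone{g_j}-g_j\in\cR(1\pm\bx)_{d_\bg}$, hence $1-\bar g_j\in\cR(1\pm\bx)_{d_\bg}$. Clearly $\bar g_j\in\cR(\bg)_{d_\bg}$, since it is a positive multiple of $g_j$.

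\emph{Lifting.} With $T=[-1,1]^n\times[0,1]^m$, \Cref{thm:enhLift} gives $F$ with $\deg F=\max\{d,2d_\bg\}$ and $f=F(\bx,\bbg(\bx))$. Combining the bounds in \Cref{thm:enhLift}\ref{it:T1} and \ref{it:T3} with \Cref{lem:scaled-loj} yields
\[
\frac{\max_T F}{\min_T F}\le 2\bigl(1+2m(\max_j\normone{g_j})\cl_\bg^2\bigr)\bigl(4d^2\kappa_{S_\bg}(f)\bigr)^{2\Ll_\bg}.
\]

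\emph{Certification on the cube.} The map $(\bx,\bu)\mapsto(\bx,\tfrac{1+\bu}{2})$ is affine, so $\tilde F(\bx,\bu):=F(\bx,\tfrac{1+\bu}2)$ has the same degree and the same condition number on $[-1,1]^{n+m}$. We apply \Cref{thm:HandBox} to $\tilde F$ in $n+m$ variables and get $\tilde F\in\cR(1\pm\bx,1\pm\bu)_{R(n+m)}$ for
\[
R\approx 60^{\deg F}(\deg F)^3(n+m)^{\deg F}\,\tilde\kappa .
\]
By \Cref{lem:trans}\ref{it:trans2} we then have $F\in\cR(1\pm\bx,\bu,1-\bu)_{R(n+m)}$. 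Substituting $\bu\leftarrow\bbg$ gives
\[
f\in\cR(1\pm\bx,\bbg,1-\bbg).
\]
Every product $\bbg^\alpha(1-\bbg)^\beta$ can be rewritten as a nonnegative combination of products of $g_j$ and $1\pm\bx_i$. This uses the inclusions above and \Cref{lem:SubsQTR}\ref{it:lem3} with $r_0=d_\bg$, which keeps $1\pm\bx$ as generators. Hence $f\in\cR(1\pm\bx,\bg)_{d_\bg R(n+m)}$.

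\emph{Bookkeeping, the main obstacle.} We must match the stated form
\[
r=C_\bg\,60^d d^{4\Ll_\bg+3}n^{2d_\bg d}\kappa_{S_\bg}(f)^{2\Ll_\bg}.
\]
The factor $d^{4\Ll_\bg}\kappa^{2\Ll_\bg}$ comes from $\tilde\kappa$, and $d^3$ from $(\deg F)^3$ up to $d_\bg$-dependent constants. Since $\deg F\le d+2d_\bg$, we have $60^{\deg F}\le 60^{2d_\bg}\,60^d$, and the $60^{2d_\bg}$ is absorbed into $C_\bg$. The delicate factor is $(n+m)^{\deg F}$. I would bound it by $(n+m)^{d+2d_\bg}$, absorb the $m$-dependence into $C_\bg$, and observe that $(n+m)^{d+2d_\bg}\le C_\bg\,n^{2d_\bg d}$ when $d\ge 1$ and $d_\bg\ge1$, because $d+2d_\bg\le 2d_\bg d+1$ (the extra factor of $n$ is also absorbed, or avoided by using $\max\{d,2d_\bg\}\le 2d_\bg d$). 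Finally, the extra $d_\bg$ and $(n+m)$ degree multipliers are folded into the statement's scale $r(n+m)$ and into $C_\bg$. The constant case $d=0$ is trivial.
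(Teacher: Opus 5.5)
Your route matches the paper's proof step for step. You rescale to $\bar g_j=g_j/\normone{g_j}$, lift with \Cref{thm:enhLift} on $T=[-1,1]^n\times[0,1]^m$, apply \Cref{thm:HandBox} to $F(\bx,\tfrac{1+\bu}{2})$, pull back, substitute $\bu\leftarrow\bbg$, and eliminate $1-\bbg$ via \Cref{lem:norm1f-f}\ref{normfit:one} and \Cref{lem:SubsQTR}\ref{it:lem3}. This gives $f\in\cR(1\pm\bx,\bg)_{d_\bg R(n+m)}$ with $R=60^{\deg F}(\deg F)^3(n+m)^{\deg F}\tilde\kappa$. Everything up to that point is sound. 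Your inclusion $1-\bbg\subseteq\cR(1\pm\bx)_{d_\bg}$ is the correct one. Also, \Cref{lem:scaled-loj} actually gives $\cl_{\bbg}^2\le(\max_j\normone{g_j})^2\cl_\bg^2$, with a square; this only changes $C_\bg$.

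The step that fails is the final absorption, at exactly the factor you flagged. Write $(n+m)^{\deg F}=n^{\deg F}(1+m/n)^{\deg F}$; the second factor is not a constant depending only on $\bg$. For $n=1$ and $d\ge 2d_\bg$ it equals $(1+m)^d$. The target $C_\bg 60^d d^{4\Ll_\bg+3}n^{2d_\bg d}\kappa_{S_\bg}(f)^{2\Ll_\bg}$ has no room for this extra exponential in $d$: the $60^d$ is already consumed by $60^{\deg F}$, the powers of $d$ are polynomial, and $n^{2d_\bg d}=1$. So your inequality $(n+m)^{d+2d_\bg}\le C_\bg n^{2d_\bg d}$ is false for any $C_\bg$ independent of $d$. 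Similarly, an ``extra factor of $n$'' cannot be absorbed into $C_\bg$. Your alternative $\max\{d,2d_\bg\}\le 2d_\bg d$ is the right way to get the exponent $2d_\bg d$ on $n$, but it does not address $(1+m/n)^{\deg F}$.

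What the argument actually proves is the bound with $(n+m)^{\max\{d,2d_\bg\}}$ in place of $n^{2d_\bg d}$. Alternatively, since $(n+m)^{\deg F}\le(1+m)^{d+2d_\bg}n^{2d_\bg d}$, you get the stated form with $60^d$ replaced by $(60(1+m))^d$. The stated form does follow in some regimes, e.g.\ when $m\le n^2-n$: then $n+m\le n^2$ gives $(n+m)^{\deg F}\le(1+m)^{2d_\bg}n^{2d_\bg d}$. The paper's proof does not do this bookkeeping either; it stops after bounding $\tilde\kappa$. So the gap is inherited from the source, but it should be fixed by stating a corrected bound, not by hiding it in $C_\bg$.
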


\begin{proof}
From \Cref{lem:norm1f-f}\ref{normfit:one}, we have that for $j=1,\dots,m$,
\begin{equation}\label{eq:NHbound}
\normone{g_j} - g_j(\bx) \subset \cR(1 \pm \bx)_{d_\bg}.
\end{equation}
Let
\[
\bar g_j(\bx) := \frac{g_j(\bx)}{\normone{g_j}},
\]
for $j=1,\dots,m$. If follows that
\[
\normsup{\bar g_j} = \frac{\normsup{g_j}}{\normone{g_j}} \le 1,
\]
for $j=1,\dots,m$.
Further, from~\eqref{eq:NHbound} we  have
\begin{equation}\label{eq:Htrans}
1- \bbg \subseteq \cR(\bg)_{d_\bg}.
\end{equation}
Also note that
\[
S_{\bbg} = S_\bg
\subseteq [-1,1]^n.
\]
Let $T = [-1,1]^n \times [0,1]^n $, then  $\{(x,\barg(x)): x\in S_{\bg}\} \subseteq T$.
Since  $f>0$ on $S_{\bg}$, by Theorem~\ref{thm:enhLift},
there is  $F \in \R[\bx, \bu]$ with  $\deg F = \max\{d, 2d_\bg\} \le d+2d_{\bg}$, such that
\begin{equation}
\label{eq:Handprojection}
f(\bx) = F(\bx,\barg(\bx)),
\end{equation}
and
\begin{equation}\label{eq:minmaxFHand}
\min_{(x,u)\in T}F(x,u) \ge \tfrac 12 f_{\min} \text{ and } \max_{(x,u)\in T} F(x,u)  \le (1+ 2m\cl_{\barg}^2)(4d^2 \kappa_{S_{\barg}}(f))^{2\Ll_{\barg}}f_{\min}.
\end{equation}
Now we consider the  polynomial transformations $(\bx,\bu) \mapsto (\bx,\frac {1+\bu}2)$ mapping $[-1,1]^{n+m}$ to $T$ (of degree $1$) and its inverse  $(\bx,\bu) \mapsto (\bx,2\bu-1)$.
By \Cref{lem:trans}\ref{it:trans1}, $F(\bx,\bu) > 0$ on $T$ implies $F(\bx,\frac {1+\bu}2) > 0$ on $[-1,1]^{n+m}$. Thus, it follows from \Cref{thm:HandBox} that taking
\begin{equation}\label{eq:rHand}
\begin{gathered}
r = 60^{d+2d_\bg} (d + 2d_{\bg})^{3} (n+m)^{d + 2d_{\bg}}\tilde{\kappa}\\
\text{ with }
\tilde{\kappa} := \frac {\max_{(x,u) \in [-1,1]^{n+m}} F(x,\frac {1+u}2)}
{\min_{(x,u) \in [-1,1]^{n+m}} F(x,\frac {1+u}2)},
\end{gathered}
\end{equation}
we have
\begin{equation*}
F(\bx,\tfrac {1+\bu}2) \in \cR(1 \pm \bx, 1 \pm \bu)_{r(n+m)}.
\end{equation*}
Thus, by \Cref{lem:trans}\ref{it:trans2}
\begin{equation}\label{eq:Handback}
F(\bx,\bu)
\in \cR(1 \pm \bx, 1 \pm (2\bu-1))_{r(n+m)} = \cR(1 \pm \bx, \bu, 1-\bu)_{r(n+m)}.
\end{equation}

Substituting $\bu \leftarrow \barg$ in~\eqref{eq:Handback} and using~\eqref{eq:Handprojection}, we obtain
\[
  f(\bx) = F(\bx,\barg(\bx)) \in  \cR(1 \pm \bx, \barg, 1-\barg)_{r(n+m)}.
  \]
Therefore,  by~\eqref{eq:Htrans} and \Cref{lem:SubsQTR}\ref{it:lem3},
\[
  f(\bx) \in \cR(1 \pm \bx, \bg)_{d_\bg r(n+m)}.
\]
To conclude the proof, notice that in~\eqref{eq:rHand}
\begin{align*}
\tilde k &= \frac {\max_{(x,u) \in T} F(x,u)}{\min_{(x,u) \in T} F(x,u)} \\
&\le 2(1+ 2m\cl_{\bar \bg}^2)(4d^2 \kappa_{S_\bg}(p))^{2\Ll_{\bar \bg}} &\text{(by~\eqref{eq:minmaxFHand})}\\
& = 2(1+ 2m(\max_{j=1,\dots,m} \normone{g_j})^2\cl_{\bg}^2)(4d^2 \kappa_{S_\bg}(p))^{2\Ll_{\bg}} &\text{(by \Cref{lem:scaled-loj})}.
\end{align*}
\end{proof}

As shown next, the degree bound for the extended-Handelman's {\germanF} readily provides a degree bound for the Krivine-Stengle {\germanF}.

\begin{corollary}[Effective Krivine-Stengle {\germanF}]\label{thm:degbdKS}
Let $f \in \R[\bx]$ with $\deg f = d$, and $g_1,\dots,g_m \in \R[\bx]$ such that $S_\bg \subset [-1,1]^n$.
Assume $1-g_j \ge 0$ on $S_\bg$ for $j=1,\dots,m$ and that
$\{1,g_1,\dots,g_m\}$ generates $\mathbb{R}[\bx]$.
If~$f > 0$ on $S_\bg$, then there is a constant $C_\bg > 0$ depending only on $\bg$ such that
\[
f \in \cR(\bg, 1-\bg)_{r(n+m)},
\text{ whenever }
r =  C_\bg 2^d  d^{4\Ll_\bg + 3}n^{2d_\bg d}\kappa_{S_\bg}(f)^{2\Ll_\bg}.
\]
\end{corollary}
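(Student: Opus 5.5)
The plan is to derive the corollary directly from \Cref{thm:genHand}. Once we know $1\pm\bx_i\in\cR(\bg,1-\bg)_{\newell_\bg}$ for some $\newell_\bg$ depending only on $\bg$, \Cref{lem:SubsQTR}\ref{it:lem3} gives the inclusion
\[
\cR(1\pm\bx,\bg)_{r'}\subseteq \cR(\bg,1-\bg)_{\newell_\bg r'} .
\]
Indeed, adding the generators $1-\bg$ only enlarges the cone, so $\cR(1\pm\bx,\bg)\subseteq \cR(1\pm\bx,\bg,1-\bg)$. Substituting each $1\pm\bx_i$ by its representation in $\cR(\bg,1-\bg)_{\newell_\bg}$ then multiplies the degree by at most $\newell_\bg$.

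The first step is to obtain $\newell_\bg$. By hypothesis $1-g_j\ge 0$ and $g_j\ge 0$ on $S_\bg$, and $S_\bg\subset[-1,1]^n$, so the polynomials $1\pm\bx_i$ are nonnegative on $S_\bg$. I would rather have strict positivity, so I would first apply the argument to $1+\delta\pm\bx_i$ for a small $\delta>0$ depending on $\bg$, or simply use that $S_\bg$ is compact and contained in $[-1,1]^n$. Krivine--Stengle's Positivstellensatz (\Cref{thm:certificates}\ref{it:KS}) applies because $\{1,g_1,\dots,g_m\}$ generates $\R[\bx]$. It gives membership of $1\pm\bx_i$ in $\cR(\bg,1-\bg)_{t_i}$ for some finite $t_i$, and we set $\newell_\bg=\max_i t_i$.

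To avoid the $\delta$-perturbation, I would use the generating hypothesis more directly. Write $\bx_i$ as a polynomial in the $g_j$ (this is the meaning of ``generates''), and apply the identities of \Cref{lem:norm1f-f}\ref{normfit:one} with $g_j\in[0,1]$ in place of $\bx\in[-1,1]$. Concretely, $1-\bg^\alpha$ can be expanded in products of $g_j$ and $1-g_j$. This shows that $c_i\pm\bx_i\in\cR(\bg,1-\bg)$, where $c_i$ is the $\ell_1$ norm of the representing polynomial. Rescaling the coordinates $\bx_i$ by $1/c_i$ changes $\kappa$ and $n$-dependence only by constants depending on $\bg$. Either route yields a constant depending only on $\bg$, which is all that is needed.

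The second step is to apply \Cref{thm:genHand}, which gives $f\in\cR(1\pm\bx,\bg)_{r(n+m)}$ with $r=C_\bg 60^d d^{4\Ll_\bg+3}n^{2d_\bg d}\kappa^{2\Ll_\bg}$. Combining this with the inclusion above gives $f\in\cR(\bg,1-\bg)_{\newell_\bg r(n+m)}$. The factor $\newell_\bg$ is absorbed into $C_\bg$.

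The main obstacle is that the stated bound carries $2^d$ instead of $60^d$, which cannot be absorbed into $C_\bg$ because $d$ depends on $f$. I expect this is either a typo in the statement or that the $60^d$ in \Cref{thm:genHand} was loose. I would first try to recheck \Cref{lem:Lftosup} and the Handelman step, for instance by using De Klerk--Laurent on $[-1,1]^n$ directly with a sup-norm-based bound that gives exponential base $2$. If that fails, I would state the corollary with $60^d$. Apart from this point the proof is a routine substitution.
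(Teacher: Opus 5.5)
Your skeleton is the paper's. You apply \Cref{thm:genHand}, use \Cref{thm:certificates}\ref{it:KS} to get $1\pm\bx\subset\cR(\bg,1-\bg)_{t_\bg}$, and substitute via \Cref{lem:SubsQTR}\ref{it:lem3}. Your degree $t_\bg\, r(n+m)$ is what that lemma actually gives; the paper writes $t_\bg+r(n+m)$, and either is absorbed into $C_\bg$. Your $60^d$ versus $2^d$ objection is also right: the paper's proof simply quotes \Cref{thm:genHand} with $2^d$, which that theorem does not provide. Base $2$ cannot come out of the $L(\cdot)$ route at all. The polynomial $\tilde f(x)=T_d(2x-1)$ ($T_d$ the Chebyshev polynomial) has sup-norm $1$ on $[0,1]$ but $L(\tilde f)\ge 2^{2d-1}$, so any estimate like \Cref{lem:Lftosup} has base at least $4$. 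Restating the corollary with $60^d$ is the right fallback.

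The genuine gap is your handling of the boundary. \Cref{thm:certificates}\ref{it:KS} needs $1\pm\bx_i>0$ on $S_\bg$. If $S_\bg$ touches $\partial[-1,1]^n$, the membership $1\pm\bx_i\in\cR(\bg,1-\bg)$ can fail in every degree. Take $n=1$, $g_1=(1-x)^3/4$, $g_2=(x+\tfrac12)/2$. Then $S_\bg=[-\tfrac12,1]$, $1-g_j\ge0$ on $S_\bg$, and $x=2g_2-\tfrac12$, so every hypothesis holds. Yet at $x=1$ each product $\bg^\alpha(1-\bg)^\beta$ with $\alpha_1=0$ is strictly positive, so those coefficients must vanish. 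The remaining terms are divisible by $(1-x)^3$, hence $1-x\notin\cR(\bg,1-\bg)_r$ for all $r$.

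Your two workarounds only deliver $c\pm\bx_i$ with $c>1$ (either $c=1+\delta$, or $c=\normone{P_i}$ where $\bx_i=P_i(\bg)$). The rescaling $\bx\mapsto c\bx$ needed to reach $1\pm\bx_i$ is not a $\bg$-only change, for two reasons.
\begin{enumerate}
\item $\kappa_{S_\bg}(f)$ is measured with the sup over $[-1,1]^n$, whereas the rescaled problem sees $\sup_{[-c,c]^n}|f|$. For $f=T_d(\bx_1)+2$ this is $T_d(c)+2\ge c^d$, against $\normsup{f}=3$: a factor exponential in $d$.
\item $\Ll_\bg$ is defined relative to $[-1,1]^n$ and can jump on a larger box. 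In the example $\Ll_\bg=1$, but for $x>1$ one has $\dist(x,S_\bg)=x-1$ and $G(x)=(x-1)^3/4$. So the {\L}ojasiewicz exponent on $[-c,c]$, $c>1$, is $3$, and the power of $\kappa$ would become $6$ instead of $2$.
\end{enumerate}
The paper's proof has the same hole, since it applies \Cref{thm:certificates}\ref{it:KS} to $1\pm\bx$ without comment. The simplest repair is to assume $S_\bg\subset(-1,1)^n$, as in \Cref{thm:ourschmu}. Then Krivine--Stengle applies to $1\pm\bx_i$ directly, no perturbation or rescaling is needed, and your argument goes through with $60^d$.
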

\begin{proof}
By \Cref{thm:genHand} we have, $f  \in \cR(1 \pm \bx, \bg)_{r(n+m)}$ for $ r \ge C'_\bg 2^d n^{2d_\bg d} d^{4\Ll_\bg + 3}\kappa_{S_\bg}(f)^{2\Ll_\bg}$ for some $C'_\bg > 0$ depending only on $\bg$.
By the Theorem~\ref{thm:certificates}\ref{it:KS}, there is $\newell_\bg$ depending only on $\bg$ such that $1\pm \bx \subset \cR(\bg, 1- \bg)_{\newell_\bg}$ and therefore, by \Cref{lem:SubsQTR}, $f \in \cR(\bg, 1- \bg)_{\newell_\bg+r(n+m)}$.
\end{proof}

\section{Final remarks}
\label{sec:final}
The identity~\eqref{eq:Hopt}; that is, \(
H(x) = \min_{u \in [0,1]^m} |u-\bg(x)|2^2\) for all \(x \in [-1,1]^n\),
used in the proof of \Cref{thm:enhLift} shows that $|u-\bg(x)|2^2 =\sum{j=1}^m(u_j-\bar g_j(x))^2$ is a polynomial lifting of the semialgebraic function $H(\bx)$. A similar identity involving $G(\bx)$ (see~\eqref{eq:Gdef}) will allow us to drop the $2$ in the exponent obtained in \Cref{thm:enhLift}\ref{it:T3}.
Such a result will be optimal, in the sense that this exponent can not be smaller than $\Ll_\bg$. Otherwise, by taking $\bg = 1\pm \bx$, and using that $\Ll_\bg=1$ in this case, one can obtain improvements ``for free" on the degree bounds for the {\germansF} on the hypercube, unless the degree bound does not depend on the condition number of $f$. Having such an optimal exponent will imply that for the instance class where $\Ll_\bg =1$, the error bounds have the same exponent on the condition number, as in the hypercube. As mentioned earlier, $\Ll_\bg =1$ holds under common regularity assumptions~\citep[][Thm.~2.11]{baldi2023effective}.

We focused on the condition number $\kappa_{S_\bg}(f) = \normsup{f}/f_{\min}$ of a polynomial $f > 0$ on $S_{\bg}$. Recent work gives degree bounds in terms of condition numbers of the form $\|f\|_{\coeff}/f_{\min}$, where $\|f\|_{\coeff}$ denotes a norm (e.g., $\ell_1$ or $\ell_\infty$) on the coefficient vector in a chosen basis (e.g., monomial or Tchebychev). Because the proof of \cref{thm:enhLift} is constructive, similar bounds for these norms can be obtained as for the sup-norm. Thus, our method can also be applied to those cases.

We leveraged degree bounds for the hypercube to obtain degree bounds for general (compact semialgebraic) sets. However, it is not difficult to see that the proposed lift-and-project methodology will work for other choices of simple sets, such as the ball or the simplex. It will be interesting to see if using a different set will lead to improvements in the degree bounds for general sets. We believe it will be unlikely for this approach to lead to advantages in terms of the dependence of the degree bounds on the condition number of the polynomial of interest, but it may lead to improvement in terms of other constants related to the polynomials involved in the nonnegative certificate.

Finally, as mentioned earlier, the lift-and-project methodology presented here to obtain the degree bounds is motivated by the work in~\cite{kuryatnikova2024reducing}. In there, the results initially obtained for compact semialgebraic sets are extended to unbounded semialgebraic sets. Thus, we expect that by following an approach similar to the one presented here, the lift-and-project methodology can be extended to obtain degree bounds (equivalently, convergence rates) associated with hierarchies to obtain the minimum of a polynomial over an unbounded semialgebraic set, as is the case, for example, in hierarchies designed to solve {\em copositive programs}~\citep{Bomz09}.

\end{document}